\numberwithin{equation}{section}
\renewcommand{\a}{\alpha}
\renewcommand{\b}{\beta}
\newcommand{\G}{\Gamma}
\renewcommand{\d}{\delta}
\newcommand{\D}{\Delta}
\newcommand{\e}{\epsilon}
\newcommand{\z}{\zeta}
\renewcommand{\k}{\kappa}
\newcommand{\vk}{\varkappa}
\renewcommand{\l}{\lambda}
\newcommand{\m}{\mu}
\newcommand{\n}{\nu}
\newcommand{\X}{\Xi}
\renewcommand{\r}{\rho}
\newcommand{\s}{\sigma}
\renewcommand{\t}{\tau}
\newcommand{\f}{\phi}
\newcommand{\F}{\Phi}
\newcommand{\h}{\chi}
\newcommand{\p}{\psi}
\renewcommand{\o}{\omega}
\renewcommand{\O}{\Omega}
\newcommand{\C}{{\mathbb C}}
\newcommand{\R}{{\mathbb R}}
\newcommand{\Fc}{{\mathcal F}}
\newcommand{\Qc}{{\mathcal Q}}
\DeclareMathOperator{\im}{{\rm Im}\,}
\DeclareMathOperator{\re}{{\rm Re}\,}
\newtheorem{theorem}{Theorem}[section]
\newtheorem{proposition}[theorem]{Proposition}
\theoremstyle{definition}
\theoremstyle{remark}
\newtheorem{example}[theorem]{Example}
\begin{document}

\title[zero modes]{Entire functions in weighted $L_2$ and   zero modes of the Pauli operator with non-signdefinite magnetic field}
\author[Rozenblum]{Grigori Rozenblum}
\address[Rozenblum]{Department of Mathematics
                        Chalmers University of Technology \\
                        and Department of Mathematics University of Gothenburg \\
                         S-412 96 Gothenburg \\
                        Sweden}
\email{grigori@math.chalmers.se}
\author[Shirokov]{Nikolay Shirokov}
\address[Shirokov]{Department of Mathematics and Mechanics \\
                      St. Petersburg State University\\
                      Russia}
\email{nikolai.shirokov@gmail.com}

\begin{abstract} For a real non-signdefinite  function $B(z)$, $z\in \C$, we investigate
the dimension of the space of entire analytical
functions square integrable with weight $e^{\pm 2F}$,
where the function $F(z)=F(x_1,x_2)$ satisfies the
Poisson equation $\D F=B$. The answer is known for the
function $B$ with constant sign. We discuss some
classes of non-signdefinite positively homogeneous
functions $B$, where both infinite and zero dimension
may occur. In the former case we present a method of
constructing entire functions with prescribed behavior
at infinity in different directions. The topic is
closely related with the question of the dimension of
the zero energy subspace (zero modes) for the Pauli
operator.
\end{abstract}

\keywords{Pauli operators, Zero modes, Entire
functions}
\date{\today}
\subjclass[2000]{30D15, 81Q10, 47N50, 35Q40}
 \maketitle


\section{Introduction}
\label{intro} In 1979 Y.Aharonov and A.Cacher in
\cite{ahacash79} discovered that the Pauli operator in
dimension 2 with a   compactly supported bounded
magnetic field $B(x),\; x=(x_1,x_2)$, can possess zero
modes, eigenfunctions with zero energy. The number of
these zero modes (the dimension of the zero energy
eigenspace) is finite and is determined by the total
flux of the magnetic field. The  zero modes problem
has been investigated further on and  the
Aharonov-Casher formula was extended to rather
singular and not compactly supported magnetic fields
being signed measures with finite total variation
(\cite{erdosvug02}). On the other hand, sign-definite
fields with infinite flux, the authors proved in
\cite{rozshir} that the space of zero modes is
infinite-dimensional, thus extending the
Aharonov-Casher formula to this case. Moreover, the
infiniteness of zero modes was established in
\cite{rozshir} for a class of magnetic fields with
variable sign, such that in certain sense the part
having one direction is infinitesimal with respect to
the part with another direction, while both parts have
infinite flux, as well for weakly perturbed constant
magnetic fields. On the other hand, in
\cite{erdosvug02} an example was constructed of a
magnetic field consisting of tiny islands, sparsely
placed in the plane, carrying positive magnetic field,
on the background of annuli with negative field, such
that both positive and negative parts of the field
have infinite total flux, so that no zero modes exist.
So, it was, generally, unclear, what is the situation
with zero modes for the case when neither of
sign-parts of the magnetic field prevails over the
other one. After having been acquainted with
\cite{rozshir}, B.Simon asked the first author (G.R.)
about the number of zero modes for a very simple
configuration of the field of this kind: some constant
with one sign in one half-plane and a constant with
different sign in another one. The answer (no zero
modes at all) was found quite easily, but a more
general question arose: how many zero modes are
generated by the magnetic field which is constant in a
sector in the plane and constant, with different sign,
in the complement of this sector, or, more generally,
by a non-signdefinite radial-homogeneous field. The
present paper contains some results in this direction.
For the sector case, it turns out that if one of the
sectors is sufficiently small, the space of zero modes
is infinite-dimensional. On the other hand, if the
angles of the sectors  are sufficiently close to
$\pi$, zero modes are completely absent. Somewhat
similar situation takes place for fields with some
other degree of homogeneity.

Starting from the paper \cite{ahacash79}, it became
clear that the progress in the  zero modes problem
depends heavily on the properties of solutions of the
Poisson equation $\Delta F(x)=B(x)$ in the terms of
$B(x)$. The zero modes are entire analytical (or
anti-analytical) functions $u(z)$ of the variable
$z=x_1+ix_2$ such that $u\exp(\pm F)\in L_2(\R^2)$. If
$B(x)=B>0$ is a nonzero constant the equation has a
solution of the form $F(x)=\frac{B}{2}|x|^2$, and this
fact obviously leads to the infiniteness of the
dimension of the space of analytical functions with
$u\exp(- F)\in L_2(\R^2)$. However, generally, the
boundedness of $B$ does not guarantee by itself a
quadratic estimate for $F$, moreover, it may happen
that the Poisson equation has no semi-bounded
solutions, and such a straightforward reasoning about
zero modes fails. We need a deeper analysis of entire
functions, square integrable with weight $\exp(\pm
2F)$, without the condition imposed, that $F$ is
subharmonic (the subharmonic case is investigated
exhaustively in \cite{horm} and \cite{shigekawa}).

We start in Section 2 by considering solutions of the
Poisson equations for a radial homogeneous right-hand
side. In particular,  for our 'sector' configuration
of the field we  construct a special solution of the
Poisson equation. This solution $F$ is not
semi-bounded, behaves itself at infinity as
$C|x|^2\log|x|$ in all directions but four, with $C$
depending on the direction and having variable sign.
Next, in Sect.3, we construct entire analytical
functions $u$ such that $u\exp (-F)\in L_2$. Such
functions $u$ must decay rather rapidly in directions
where $F$ is negative, and they may grow,  but in a
controllable way, in directions where $F$ is positive.
We present a method for constructing entire functions
with such behavior. This construction can be put
through provided the angle of the sector where $B$ is
negative is sufficiently small. So, it turns out that
in this latter case there are infinitely many zero
modes.

We consider the case of a sector with an astute angle
in Sect.4. We show that in this case there are no zero
modes at all, provided that the angle of the sector is
sufficiently close to $\pi$. The reason for this is
that, if an entire function  is square integrable in a
sector with sufficiently fast growing weight, it must
be zero everywhere, disregarding its behavior outside
the sector. This idea requires certain work for being
implemented in our case, since for an sector type
magnetic field the sets where the potential has fixed
sign differ only slightly from the quarter-plane so
that logarithmic effects must be taken into account.

In the final section we briefly consider magnetic
fields, radial  homogeneous of some negative order.

The main part of the results were obtained when the
second author (N.Sh.) was enjoying the hospitality of
Chalmers University of Technology, being  supported by
a grant from the Swedish Royal Academy of Sciences
(KVA).
\section{General constructions}\label{general}
\subsection{Homogeneous solutions of the Poisson equation}
We identify the real plane $\R^2$ with the complex
plane $\C^1$, setting $z=x_1+ix_2$; by $d\m$ we denote
the Lebesgue measure on the plane. Let $B(x)$ be a
real-valued function in $\R^2$ positively homogeneous
of degree $s$. Then, as it is well known,  the
solution of the Poisson equation
\begin{equation}\label{Poisson}
    \D F(x)=B(x)
\end{equation}
 can be
looked for as a positively homogeneous function of
degree $s+2$. In fact, if the function $B$ has the
form $b(\p) r^s$ in polar coordinates $(r,\p)$, we can
look for $F$ in the form $\varphi(\p)r^{s+2}$ and
obtain an equation for $\varphi$

\begin{equation}\label{2:EqForF}
    \varphi(\p)''+(s+2)^2 \varphi(\p)=b(\p)
\end{equation}

If $s$ in not an integer,  \eqref{2:EqForF} has a
unique solution, and thus $F=\varphi(\p)r^{s+2}$ is a
solution for \eqref{Poisson}. However, if $s$ is an
integer, the equation \eqref{2:EqForF} is solvable
only for those $b$ which are orthogonal to
$\exp(i(s+2)\p)$. If this orthogonality condition is
not satisfied, the solution of \eqref{Poisson} must
contain a logarithmic factor,
\begin{equation}\label{2:log}
    F(r,\p)=A\sin((s+2)(\p-\p_0))r^{s+2}\log r
    +\varphi(\p)r^{s+2},
\end{equation}
with properly selected $A$ and $\p_0$. Such case will
be referred to as the resonance one; if the
orthogonality condition is met as well as for a
noninteger $s$ we have the non-resonance case.

 For $0<\a<\pi$ we
denote by $\O_1$ the sector $\p=\arg z\in(0,\a)$ and
by $\O_2$ the complementing sector in the  plane.
Having two numbers, $b_1<0<b_2$, we set $B(x)\equiv
B(x)=2b_1$ for $x\in \O_1$ and $B(x)=2b_2$ in $\O_2$.
By scaling, one can reduce the situation to the case
$b_2=1$ and we always suppose that it is already done.

We are looking for a solution $F(x)$ of the equation
\eqref{Poisson}. Since the homogeneity degree equals
$s=0$, the solution must contain a power-logarithmical
term as in \eqref{2:log}. It is convenient to write
the solution of \eqref{Poisson} in a somewhat
different form.

\subsection{A solution of the Poisson equation for the sector configuration}

 We are looking for an
explicit formula for $F$. This function will be
constructed step-wise. We start by  elementary
solutions separately in $\O_1$ and $\O_2$. These
solutions do not fit together on the ray $\arg z=\a$.
Then some correction terms will be introduced.

So, we start with
\begin{equation*}
    \f(z)= b_1 x_2^2,  z\in\O_1; \f(z)=
          x_2^2,  z\in\O_2.
\end{equation*}
This function satisfies the equation \eqref{Poisson}
everywhere except the ray $L_\a=\{\arg z=\a\}$ where
it is discontinuous. To compensate this jump, as well
as the jump of its derivative, we will use the branch
of logarithm, continuous in the domain
$\C^{(\a)}=\C\setminus L_\a$. For the function
$\xi(z)=\frac{(ze^{-i\a})^2}{2\pi }\log z$ the
imaginary part  has a jump on $L_\a$ while the real
part is continuous but has a discontinuous derivative.
Our first correction  will make the whole solution
continuous on $L_\a$. The jump of $\f$ at the point
$z_0=r_0e^{i\a}$ equals
\begin{equation*}
    \f(z_0)|_{\O_1}-\f(z_0)|_{\O_2}=-c_0r_0^2\sin^2\a,
\end{equation*}
$c_0=1-b_1$, therefore we set
\begin{equation}\label{2:Corr1}
    \f_1(z)=\f(z)+c_0\sin^2\a\im (\xi(z)).
\end{equation}
Since the jump of the second summand in
\eqref{2:Corr1}  at the point $z_0=r_0 e^{i\a}$ equals
$c_0 r_0^2\sin^2\a$, the function $\f_1$ is continuous
in $\C$.

We consider now the  the derivatives of $\f_1$ at
$L_\a$. Obviously, the derivative along the ray is
continuous. The derivative across the ray has a jump,
and we will compensate this jump by subtracting the
real part of $\xi(z)$ with a proper coefficient. We
set
    $F(z)=\f_1(z)-c_0\sin\a\cos\a\re(\xi(z)).$
Since we added the real and imaginary parts of
functions that are  analytic in $\C^{(\a)}$, the
Poisson equation \eqref{Poisson} will be satisfied by
$F$ in $\C^{(\a)}$. The function $F$ and its
derivatives are continuous everywhere, thus the
distributional Laplacian  of $F$ coincides with the
classical Laplacian, and therefore $F$ is the solution
we need. To get a better understanding of $F$, we
represent it in a little bit different way:
\begin{gather}\label{2:Final}
    F(z)=\f(z)+\frac{c_0\sin^2\a}{2\pi}
    \re\left( \frac1{i}(ze^{i\a})^2\log z\right)-\\\nonumber\frac{c_0\sin\a\cos\a}{2\pi}
    \re\left((ze^{i\a})^2\log z\right)
    =\f(z)-\frac{c_0\sin\a}{2\pi}\re\left(\left(ze^{-\frac{i\a}{2}}\right)^2\log z\right).
\end{gather}
 Note that the behavior of $F(z)$ for large $|z|$ is determined by the second, power-logarithmic term in
\eqref{2:Final},
 except the directions where it vanishes, i.e., except the directions
  $\arg(z)=\frac\a{2}+k\frac{\pi}4$, $k=0,1,2,3.$
 These half-lines divide $\C$ in four quarters, in two of those the
  function $F$ grows as $C|z|^2\log|z|$, with
 some positive $C$ (depending on the direction), in the other two this
 functions tends to $-\infty$, again
  like $C|z|^2\log|z|$ but with a negative $C$ this time.

 In the next Section we will construct entire analytical functions $u(z)$
  such that $u\exp(F)\in L_2.$
  \section{Existence of zero modes }\label{existence}
  The aim of this Section is to establish the
  following fact concerning the sector
  configuration, as in Subsection 2.2.
  \begin{theorem}\label{thm:exist} Suppose that the size $\a$ of the sector
  and $b_1$ are sufficiently small. Then the space of
  entire
  analytical functions $u(z)$ satisfying $u\exp(F)\in
  L_2$ is infinite-dimensional.
 \end{theorem}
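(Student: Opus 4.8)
The plan is to reduce the construction of zero modes to a problem about entire functions that decay rapidly in the two quarters where $F\to+\infty$ (so that $u e^F$ is still $L_2$ there) while being allowed to grow moderately in the two quarters where $F\to-\infty$. Concretely, write $F(z) = \f(z) - \frac{c_0\sin\a}{2\pi}\re\big((z e^{-i\a/2})^2\log z\big)$ as in \eqref{2:Final}; after rotating the plane by $-\a/2$ we may assume the four critical directions are $\arg z = k\pi/4$, $k=0,1,2,3$, and the leading behaviour of $F$ along direction $\th$ is $-\frac{c_0}{2\pi}\cos(2\th)\,|z|^2\log|z|$ up to the quadratic (non-logarithmic) term coming from $\f$, which is $O(|z|^2)$ and hence negligible against the logarithmic term. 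Thus $e^{F(z)}$ grows like $\exp\!\big(-\tfrac{c_0}{2\pi}\cos(2\th)|z|^2\log|z|\big)$. In the quarters where $\cos(2\th)>0$ this is a strong Gaussian-type decay, so almost any polynomially-controlled $u$ survives; the binding constraint is in the quarters where $\cos(2\th)<0$, where we need $|u(z)|$ to decay like $\exp\!\big(-\tfrac{c_0}{2\pi}|\cos(2\th)|\,|z|^2\log|z|\big)$ or faster.

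First I would build a single "template" entire function $g(z)$ that decays like $\exp(-c|z|^2\log|z|)$ (for a suitable small $c>0$) in a prescribed pair of opposite quarters and does not grow faster than a comparable rate elsewhere. The natural candidates are infinite products $g(z)=\prod_{n}(1 - z/a_n)$ with zeros $a_n$ placed densely along the two "bad" bisectors, with spacing chosen so that $\log|g(re^{i\th})|$ approximates $-\tfrac{c_0}{2\pi}|\cos 2\th|\,r^2\log r$ in the bad quarters; the standard machinery here is to prescribe the counting function $n(t)$ of the zeros (here $n(t)\sim \mathrm{const}\cdot t^2\log t$, consistent with the order-$2$, zero-type-but-logarithmically-corrected growth) and to compute $\log|g|$ via the associated subharmonic potential $\int \log|1-z/w|\,d\nu(w)$. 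Because the target weight $e^{-2F}$ is itself (minus) a subharmonic-type function with a matching homogeneous-plus-logarithmic profile, I expect $-F$ to be, up to a harmless harmonic/bounded correction, exactly of the form $\int\log|z-w|\,d\nu(w)$ for an explicit measure $\nu$ supported on the bad bisectors, and then $g$ is obtained by discretising $\nu$ into a divisor. Multiplying $g$ by an arbitrary polynomial $P$ gives an infinite-dimensional family $\{P g\}$, all satisfying $Pg\,e^F\in L_2$, which yields the theorem.

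The smallness hypotheses on $\a$ and $b_1$ enter as follows: $b_1$ small (with $b_2=1$) keeps $c_0 = 1-b_1$ close to $1$ but, more importantly, keeps the coefficient $\frac{c_0\sin\a}{2\pi}$ and hence the required decay rate small, and $\a$ small makes the "bad" sectors (where $u$ must decay) narrow, so that the mass of the approximating measure $\nu$ concentrated there is correspondingly small and the infinite product converges with room to spare; quantitatively one needs the total logarithmic flux that must be "absorbed" by the zeros of $g$ in the bad quarters to be dominated by what is available, and this is precisely where a largeness threshold on $\a$ would break the construction (matching the complementary non-existence result announced for $\a$ near $\pi$). The main obstacle, and the part needing real work, is the \emph{two-sided} matching: it is easy to force $g$ to decay fast in the bad quarters, but one must simultaneously ensure $g$ does not itself decay (or grow) in the \emph{good} quarters in a way that clashes with the factor $e^F$ there — i.e. one needs sharp upper \emph{and} lower bounds on $|g|$ on the critical rays $\arg z = k\pi/4$, where the logarithmic term in $F$ vanishes and the subleading quadratic term from $\f$ becomes decisive. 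Controlling $\log|g|$ on and near these four directions, where the Phragmén–Lindström/potential estimates are most delicate because the target is only quadratic rather than quadratic-log, is the technical heart of Section 3; I would handle it by choosing the zero set $\{a_n\}$ to lie strictly inside the bad sectors with a definite angular gap from the critical rays, and estimating the potential by splitting the integral into a near-field part (controlled by the local spacing) and a far-field part (controlled by the prescribed counting function), then checking the resulting bound against $-\f(z)$ on those rays.
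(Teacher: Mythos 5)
Your high-level strategy — build an entire function as a canonical product whose $\log$-modulus cancels the $r^2\log r$ part of $F$, then multiply by arbitrary polynomials to get an infinite-dimensional family — is indeed the strategy the paper follows in Section~3. But several of your technical specifications are off in ways that would break the construction.

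First, the product $g(z)=\prod_n(1-z/a_n)$ is a genus-$0$ product, which cannot converge when $n(t)\gtrsim t^{1+\delta}$. Since the relevant weight is of order $2$ (with a logarithmic factor), you need the genus-$2$ primary factors $(1-z/a)\exp\!\big(z/a+\tfrac12 z^2/a^2\big)$; in the paper these are packaged, after pairing antipodal zeros, as $(1-z^2/a_Q^2)\exp(z^2/a_Q^2)$.

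Second, your counting function $n(t)\sim t^2\log t$ is one logarithm too heavy. For a genus-$2$ product with zeros in a thin sector, the leading angular contribution is $\re(z^2)\sum 1/|a_n|^2$, and with $n(t)\sim t^2$ this sum accumulates a $\log|z|$, giving exactly $|z|^2\log|z|$ — which is what you need. With $n(t)\sim t^2\log t$ you would produce growth of order $|z|^2\log^2|z|$, overshooting $F$.

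Third, and most importantly, your stated rationale for where to put the zeros is inverted. A genus-$2$ canonical product does not \emph{decay} along the rays carrying its zeros; the dominant term $\re(z^2/a^2)=|z|^2\cos 2(\arg z-\arg a)/|a|^2$ shows that $|g|$ \emph{grows} fastest in the direction of the zeros and decays in the perpendicular directions. The cancellation that makes $u\exp(\pm F)\in L_2$ is between the growth of the product in one pair of quarters and the exponential decay of the weight there, not because the product itself is small in the "bad" quarters. You need to redo the indicator bookkeeping with the correct sign; once you do, you will find the zero placement can be made consistent, but your reasoning as written would mislead you into checking the wrong inequalities.

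Fourth, there is a genuine gap: you have no mechanism for the purely quadratic ($r^2$, no $\log$) terms that remain after the $r^2\log r$ pieces cancel. On the four critical rays $\arg z=\a/2+k\pi/4$ the log-quadratic term of $F$ vanishes and the quadratic contribution from $\f(z)=b_1x_2^2$ (resp. $x_2^2$) becomes the leading one; a canonical product whose log-modulus is designed to match the $r^2\log r$ profile has zero indicator on exactly those rays, so "choosing zeros with an angular gap from the critical rays" provides no decay of the correct $r^2$ size there. The paper handles this by inserting an explicit Gaussian factor $\exp\!\big(-\tfrac14(ze^{-i\a/2})^2\big)$ into $u$, tuned so that its real part is negative precisely in the quarter where $\f$ would otherwise spoil integrability and small elsewhere. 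Without this extra harmonic/entire factor your proposed $u=gP$ will not be square integrable.

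Finally, a structural difference worth noting: the paper does not put the zeros on a one-dimensional ray. It tiles two thin sectors $\{|\arg z-j\pi|<\e\}$ by congruent cells $Q$ of area $\s^2$ and places one zero at (essentially) the centroid of each cell, then compares the resulting discrete sum to the exact integral $W_\e(z)$ (Propositions \ref{3:Prop}–\ref{3:prop3}). This two-dimensional discretisation is what makes the error estimates $V_\e-\re W_\e$ tractable, including the delicate near-diagonal region $\frac12|z|\le|w|\le 2|z|$. A ray distribution would require a quite different (and, near the ray, more singular) error analysis. The smallness of $\a$ enters the paper's proof through the choices $\e=\a^{1/2}$, $\s=\k^{-1/2}$, which shrink the error terms $\e O(|z|^2)$, $O(|\log\s|)$, $C\s|z|$ below the quadratic decay furnished by $\f$ and the Gaussian factor; this is more concrete than your "flux absorption" heuristic and is the place where the numerics actually bite.
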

\subsection{Construction of a subharmonic function}\label{3}
 In this
subsection we  construct a subharmonic function of a
special form, to be used further on in the
construction of analytical functions with prescribed
behavior at infinity.
 We fix some positive $\e$, to be determined later.
 Consider two sectors $\Theta_j^\circ=\{z:|\arg z-\pi j|<\e\}$, $j=0,1,$ and
  set $\Theta_j=\Theta_j^\circ\cap\{|z|>1\}$.
  For some fixed $\s$, we  cut each of the sectors into strips by straight lines $\im z= k\s;\ k=0,\pm1,\pm2,\dots$.
  Starting from the  boundary
  lying closest to the imaginary axis, we cut each such strip by lines
   parallel to the imaginary axis,
  into domains having area $\s^2$. Just a finite number  of such domains are not polygons, a few of domains in each strip
    are triangles or trapezia, all the rest are unit squares.
   We will denote generically
  all these pieces of different form by $Q$
   and the set of these domains by $\Qc$; by $\Qc_j$ we denote the set of pieces in $\Theta_j$.
  For each  $Q\in\Qc$ we select a point $a_Q\in Q$ in the following way.
If $Q$ is a square we take the center of $Q$ as $a_Q$.
Otherwise we choose $a_Q$ so that
$\int_Q(z-a_Q)d\m=0.$ A simple geometrical
consideration shows that the distance between such
points is not less than $\s/2$.

Now we define
\begin{equation}\label{3:V1}
   V_{\e}(z)  =\re\left[\s^2\sum_{Q\in\Qc}\left(\log\left(1-\frac{z}{a_Q}\right)+
   \frac{z}{a_Q}+\frac12\frac{z^2}{a_Q^2}\right)\right].
\end{equation}
It is clear that the series in \eqref{3:V1} converges
uniformly on compacts not containing the points $a_Q$
and thus \eqref{3:V1} defines a harmonic function in
the plane, with these points removed. Due to symmetry,
we can express $ V_{\e}(z)$ via the sum only over the
domains $Q$ belonging to $\Qc_1$, i.e., lying in the
right half-plane,
\begin{equation}\label{3:V1+}
    V_{\e}(z)=\re\left[\s^2\sum_{Q\in\Qc_1}\left(\log\left(1-\frac{z^2}{a_Q^2}\right)+
    \frac{z^2}{a_Q^2}\right)\right].
\end{equation}

The function $V_{\e}(z)$ will be approximated by the
real part of the integral
\begin{equation}\label{3:int2}
     W_\e(z)=\int\limits_{-\e}^\e d\theta \int\limits_1^\infty
     \left(\log\left(1-\frac{z^2}{\t^2}e^{-2i\theta}\right)+\frac{z^2}{\t^2}e^{-2i\theta}\right)\t d\t.
\end{equation}
The behavior of  $W_\e(z)$ is studied in the Appendix.
Let us estimate the difference $V_{\e}(z)-\re W_\e(z)$
for $2\e<|\arg z|<\pi-2\e$, i.e. outside some
sectorial neighborhood of $\Theta_j$ (assuming
$\e<\pi/8$).
\begin{gather}\label{3:V-W}
    V_{\e}(z)-\re W_\e(z)=\\\nonumber
    \sum_{Q\in\Qc_1}
    \re\left[\iint\limits_Q\left(\log\left(1-\frac{z^2}{a_Q^2}\right)+\frac{z^2}{a_Q^2}
    -\log\left(1-\frac{z^2}{w^2}\right)-\frac{z^2}{w^2}\right)d\m(w)\right].
\end{gather}

 To estimate a single term in \eqref{3:V-W}, consider the function
$\b(w)=\log(1-\frac{z^2}{w^2})+\frac{z^2}{w^2}$, for
$2\e<|\arg z|<\pi-2\e$. We have
\begin{gather*}\label{3:V-W:1}
    \iint\limits_Q \b(w)d\m(w)=\\ \b(a_Q)+\b'(a_Q)\iint\limits_Q(w-a_Q)d\m(w)+
    O\left(\iint\limits_Q|\b''(w)|d\m(w)\right).
\end{gather*}
Since $\iint\limits_Q(w-a_Q)d\m(w)=0$, we get the
estimate
\begin{equation}\label{3:V-W:2}
    |V_{\e}(z)-\re W_\e(z)|\le C\sum_{Q\in\Qc_1}\int\limits_Q|\b''(w)|d\m(w).
\end{equation}
Next,
\begin{equation*}
 \b''(w)=\frac{2z^2}{w^2}
 \frac{3w^2-z^2}{(w^2-z^2)^2}+6z^2w^{-4},
\end{equation*}
 therefore,
  $ |\b''(w)|\le C|z|^2|w|^{-4}$,
 so, finally,
\begin{equation}\label{3:V-W:3}
     |V_{\e}(z)-\re W_\e(z)|\le C|z|^2\iint\limits_{|w|\ge1,|\arg w|<\e}
     |w|^{-4}d\m(w)\le \e|z|^2.
\end{equation}
Now we are going to estimate \eqref{3:V-W} in the
sectors around $x_1$-axis, $|\arg z-j\pi|\le 2\e$ for
$j=0$ or $j=1$. Of course, $V_\e$ has logarithmic
singularities at all points $z=\pm a_Q$ and $W_\e$ has
not. We surround each point by a small
disk, $|z\pm a_Q|\le \frac{\s}{4}$
and consider first this difference for $z$ lying
outside all these disks. We cut the angles into three
parts,
\begin{equation*}
    \G_1:|w|\ge2|z|;\; \G_2:|w|\le\frac12|z|;\; \G_3:|w|\in\left(\frac12|z|,2|z|\right).
\end{equation*}
Correspondingly, we denote by $\Qc_{1,j}$ the set of
those domains $Q\in\Qc_1$ for which $a_Q\in\G_j$,
$j=1,2,3$. We suppress the $z$-dependence of these
sets  in notations.

For $Q\in \Qc_{1,1}$, $w\in Q$ we have $5|w|\ge|z-w|,\
|z+w|\ge \frac{|w|}{2}$, therefore the quantity
$w^2(3w^2-z^2)/(w^2-z^2)^2$ is bounded and thus

\begin{equation}\label{3:b1}
|\b''(w)|\le C|z^2w^{-4}|.
\end{equation}
Summing over $Q\in \Qc_{1,1}$, we get
\begin{equation}\label{3:b1.1}
    \sum_{Q\in\Qc_{1,1}}\iint\limits_Q \b''(w)|d\m(w)\le C\iint\limits_{\G_1}|z^2w^{-4}|d\m(w)\le C.
\end{equation}

For $Q\in\Qc_{1,2}$, $w\in Q$, we note that
$5|z|\ge|z-w|,\ |z+w|\ge \frac{|z|}{2}$, therefore the
quantity $w^2(3w^2-z^2)/(w^2-z^2)^2$ is  bounded and
we again arrive at \eqref{3:b1}. Thus
\begin{equation}\label{3:b2.1}
    \sum_{Q\in\Qc_{1,2}}\iint\limits_Q |\b''(w)|d\m(w)\le C\iint\limits_{\G_2}|z^2w^{-4}|d\m(w)\le C\e|z^2|.
\end{equation}

The region $\G_3$ requires a harder work. In any
$Q\in\Qc_{1,3}$, we write
\begin{gather}\label{3:b3}\nonumber
        \re\left(\s^2\log\left(1-\frac{z^2}{a_Q^2}\right)+\frac{z^2}{a_Q^2}\right)-
        \re\iint\limits_Q\left(\log
        \left(1-\frac{z^2}{w^2}\right)+\frac{z^2}{w^2}\right)d\m(w)\\
=\iint\limits_Q\re\left(
\log(1-\frac{z^2}{a_Q^2})-\log
\left(1-\frac{z^2}{w^2}\right)\right)d\m(w)+\\
\nonumber\iint\limits_Q\re\left(\frac{z^2}{a_Q^2}-\frac{z^2}{w^2}\right)d\m(w).
\end{gather}
Consider the second term in \eqref{3:b3}. We have
$\frac{z^2}{a_Q^2}-\frac{z^2}{w^2}=z^2(z+w)w^{-2}a_Q^{-2}$
$(z-w)$. The quantities $|z|,|w|,|a_Q|$ are of the
same order, while $|z-w|\le 2\s\e^{-1/2}$. (Of course,
$|z-w|\le\s\sqrt{2}$ if $Q$ is a unit square, but if
$Q$ is a triangle or a trapezium, only the bound by $
2\s\e^{-1/2}$ is guaranteed.) Therefore, the second
term in \eqref{3:b3} is majorized by
$C\s\e^{-\frac12}|z^{-1}|$, and since the quantity of
domains in $\Qc_{1,3}$ is of order $\s^{-2}\e|z|^2$,
we obtain the estimate
\begin{equation}\label{3:b3.sum}
    \sum_{Q\in\Qc_{1,3}}\iint\limits_Q \re\left(\frac{z^2}{a_Q^2}-
    \frac{z^2}{w^2}\right)d\m(w)\le C\s^{-1}\sqrt{\e}|z|.
\end{equation}
Next we  estimate the first term in \eqref{3:b3}. We
transform the integrand as
\begin{equation}\label{3:b3.log}
\log\!\left(\!1-\frac{z^2}{a_Q^2}\!\right)\!+\!\frac{z^2}{a_Q^2}\!-\!\log\left(\!1\!-\!\frac{z^2}{w^2}\!\right)\!-\!\frac{z^2}{w^2}\!=
\!\log\frac{a_Q-z}{w-z}\!+\!
\log\frac{a_Q+z}{w+z}\!+\!2\log\frac{w}{a_Q}.
\end{equation}
In the second term in \eqref{3:b3.log} we write
\begin{equation}\label{b3.log.2}
    \left|\log\frac{a_Q+z}{w+z}\right|=\left|\log\left(1+\frac{a_Q-w}{z+w}\right)\right|\le
     \frac{C}{\sqrt{\e}|z|}.
\end{equation}
Similarly, the third term in \eqref{3:b3.log} is
estimated as
\begin{equation}\label{b3.log.3}
\left|\log\frac{w}{a_Q}\right|=\left|\log\left(1+\frac{w-a_Q}{a_Q}\right)\right|\le
\frac{C}{\sqrt{\e}|z|}.
\end{equation}
Now we pass to the first term in \eqref{3:b3.log}. We
split the sum into two: the sum over such $Q$ that
$|a_Q-z|\le10\s/\sqrt{\e}$ and the sum over the
remaining $Q.$ Consider the first, finite sum (recall
that $|z-a_Q|>\frac{\s}{4}$):
\begin{equation}\label{3:b3.log.1a}
    \sum_{Q\in\Qc_{1,3},
    |a_Q-z|\le10\s/\sqrt{\e}}\iint\limits_Q\left|\log\left(
    \frac{a_Q-z}{w-z}\right)\right|d\m(w)\le
    C|\log\s|\e^{-1}.
\end{equation}
For the second sum  we have
$|w-z|\ge\frac{5\s}{\sqrt{\e}}$ therefore for $w\in Q$
\begin{equation*}
\left|\log\left(\frac{a_Q-z}{w-z}\right)\right|=\left|\log\left(1+
\frac{a_Q-w}{w-z}\right)\right|\le C|w-z|^{-1},
\end{equation*}
and thus
\begin{equation}\label{3:b3.log.1b}\begin{split}
\sum_{Q\in\Qc_{1,3},
\!\!|a_Q-z|\ge\frac{10\s}{\sqrt{\e}}}\iint\limits_Q\left|\log\left|\frac{a_Q-z}{w-z}\right|\right|d\m(w)\le
C\!\!\!\!\!\!\iint\limits_{|w|\in(|z|/2,2|z|),|w-z|\ge\frac{5\s}{\sqrt{\e}}
}\!\!\!\!\frac{d\m(w)}{|w-z|}\le C\s|z|.\end{split}
\end{equation}

Summing the estimates \eqref{3:V-W:2},
\eqref{3:V-W:3}, \eqref{3:b1.1}, \eqref{3:b2.1},
\eqref{3:b3.sum},  \eqref{b3.log.2}, \eqref{b3.log.3},
\eqref{3:b3.log.1a}, \eqref{3:b3.log.1b}, we obtain
the following inequality.
\begin{proposition}\label{3:Prop}
For a given $\e$ and functions $V_\e(z)$ and $W_\e(z)$
defined as in \eqref{3:V1} and \eqref{3:int2},
\begin{equation*}
    |V_\e(z)-\re W_\e(z)|\le C \e|z|^2+ C'|\log\s|\e^{-1}+C\s|z|,
\end{equation*}
as $|z|\to\infty$ and $z$ avoids
$\frac{\s}{4}$-neighborhoods of the points $a_Q$. In
particular, using the asymptotics \eqref{5:int3}  for
$W_\e$, we have
\begin{equation}\label{3:prop.eq1}
    V_\e(z)=\frac12|z|^2\log|z|\sin(2\e)\cos(2\p)+\e O(|z|^2)+O(\log|\s|)+C\s|z|,
\end{equation}
if $z$ tends to infinity along the line
$z=|z|e^{i\p}$.
\end{proposition}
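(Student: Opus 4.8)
The plan is to bound the difference $V_\e(z)-\re W_\e(z)$ cell by cell and region by region, the organizing idea being that every $Q\in\Qc$ was chosen so that $\iint_Q(w-a_Q)\,d\m(w)=0$. Hence, wherever the integrand $\b(w)=\log(1-z^2/w^2)+z^2/w^2$ is regular across $Q$, replacing the weighted point value $\s^2\b(a_Q)$ appearing in \eqref{3:V1+} by the integral $\iint_Q\b(w)\,d\m(w)$ appearing in \eqref{3:int2} costs only a second‑order Taylor remainder, controlled by $\iint_Q|\b''(w)|\,d\m(w)$ as in \eqref{3:V-W:2}. So the task reduces to estimating that remainder, plus, near the $x_1$‑axis, a more careful direct comparison.

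First I would dispose of the region $2\e<|\arg z|<\pi-2\e$ away from the $x_1$‑axis: there $z$ stays a fixed angular distance from all the $a_Q$, the factor $w^2(3w^2-z^2)/(w^2-z^2)^2$ is bounded, so $|\b''(w)|\le C|z|^2|w|^{-4}$, and summing over $\Qc_1$ against $\iint_{|w|\ge1,\,|\arg w|<\e}|w|^{-4}\,d\m(w)$ yields the contribution $C\e|z|^2$, i.e. \eqref{3:V-W:3}. Next comes the region $|\arg z-j\pi|\le 2\e$ around the axis, with $z$ kept out of the $\tfrac\s4$‑disks about the points $\pm a_Q$ (these disks are disjoint since the $a_Q$ are $\tfrac\s2$‑separated). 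I would cut the cells according to whether $a_Q$ lies in $\G_1=\{|w|\ge2|z|\}$, $\G_2=\{|w|\le|z|/2\}$, or the critical annulus $\G_3=\{|z|/2<|w|<2|z|\}$. On $\G_1$ and $\G_2$ the quantities $|z-w|$ and $|z+w|$ are still bounded below by a fixed fraction of $|w|$, resp. $|z|$, so $|\b''(w)|\le C|z^2w^{-4}|$ again and the Taylor estimate gives the harmless contributions $C$ and $C\e|z^2|$, namely \eqref{3:b1.1} and \eqref{3:b2.1}.

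The substantive part is the annulus $\G_3$, where $z$ and $w$ are comparable, $\b''$ is no longer small, and one must compare the two integrands directly through the factorization \eqref{3:b3.log}
\[
\log\!\left(1-\tfrac{z^2}{a_Q^2}\right)-\log\!\left(1-\tfrac{z^2}{w^2}\right)=\log\frac{a_Q-z}{w-z}+\log\frac{a_Q+z}{w+z}+2\log\frac{w}{a_Q},
\]
together with separate bookkeeping of the polynomial difference $z^2a_Q^{-2}-z^2w^{-2}$. For the non‑singular pieces — the polynomial difference and the logarithms $\log\frac{a_Q+z}{w+z}$, $\log\frac{w}{a_Q}$ — one uses $|a_Q-w|\le 2\s\e^{-1/2}$ and the comparability of $|z|,|w|,|a_Q|$ to get pointwise control of order $\s\e^{-1/2}|z|^{-1}$, which against the $O(\s^{-2}\e|z|^2)$ cells of $\Qc_{1,3}$ sums to $O(\s^{-1}\sqrt\e\,|z|)$: this gives \eqref{3:b3.sum}, \eqref{b3.log.2}, \eqref{b3.log.3}. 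For the genuinely singular term $\log\frac{a_Q-z}{w-z}$ I would split $\Qc_{1,3}$ once more: the $O(\e^{-1})$ cells with $|a_Q-z|\le10\s/\sqrt\e$ contribute, in total, $O(|\log\s|\e^{-1})$, using that $\log|w-z|$ is integrable over a cell and $|z-a_Q|>\tfrac\s4$, which is \eqref{3:b3.log.1a}; on the remaining cells $|w-z|\ge5\s/\sqrt\e$, hence $\bigl|\log\tfrac{a_Q-z}{w-z}\bigr|\le C|w-z|^{-1}$, and $\iint_{|w|\in(|z|/2,2|z|),\,|w-z|\ge5\s/\sqrt\e}|w-z|^{-1}\,d\m(w)\le C\s|z|$, which is \eqref{3:b3.log.1b}.

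Adding \eqref{3:V-W:3}, \eqref{3:b1.1}, \eqref{3:b2.1}, \eqref{3:b3.sum}, \eqref{b3.log.2}, \eqref{b3.log.3}, \eqref{3:b3.log.1a} and \eqref{3:b3.log.1b} gives $|V_\e-\re W_\e|\le C\e|z|^2+C'|\log\s|\e^{-1}+C\s|z|$; inserting the Appendix asymptotics \eqref{5:int3} for $\re W_\e$ along the ray $z=|z|e^{i\p}$ then yields the stated asymptotic formula for $V_\e$. The main obstacle is the annulus $\G_3$: one must carefully account for the finitely many non‑square boundary cells (triangles and trapezia), whose diameter is only controlled by $2\s\e^{-1/2}$ rather than $\s\sqrt2$, and it is exactly this that produces the $\sqrt\e$ and $\e^{-1}$ losses, the eventual need to take $\e$ small, and the exclusion of the $\tfrac\s4$‑neighbourhoods of the $a_Q$. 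Everything outside $\G_3$ reduces to the vanishing‑moment Taylor estimate and is routine.
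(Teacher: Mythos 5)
Your proposal is correct and takes essentially the same route as the paper: the Taylor expansion keyed to the vanishing first moment $\iint_Q(w-a_Q)\,d\m=0$, the angular split into the region $2\e<|\arg z|<\pi-2\e$ and the axis neighbourhoods, the radial split into $\G_1,\G_2,\G_3$, and the treatment of the critical annulus via the factorization of the logarithms with a further dichotomy around $|a_Q-z|\lessgtr 10\s/\sqrt\e$. In the paper the Proposition is simply the summary of the displayed estimates of Subsection~3.1, so your argument is a faithful reconstruction of its proof.
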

It remains to estimate the difference in question for
$z$ in $\s/4$ neighborhood of the point $a_{Q_0}$ for
some $Q_0\in\Qc$. Note that by our construction, there
can be only one such point $a_{Q_0}$. Here we can
simply separate the term corresponding to $Q=Q_0$ in
the sum \eqref{3:V-W}. For the sum of remaining terms
the inequality we just obtained holds. This gives us
the following estimate.
\begin{proposition}\label{3:prop1}
For a given $\e$ and functions $V_e(z)$ and $W_\e(z)$
defined as in \eqref{3:V1} and \eqref{3:int2},
\begin{equation}\label{3:prop.eqq}
    \left|V_\e(z)-\re W_\e(z)-\s^2\log\left(1-\frac{z}{a_{Q_0}}\right)\right|\le C
\e|z|^2+ C'|\log\s|\e^{-1}+C\s|z|,
\end{equation}
as $|z|\to\infty$ and $z$ lies in the
$\s$-neighborhood of the point $a_{Q_0}$.
\end{proposition}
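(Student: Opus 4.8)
The plan is to obtain \eqref{3:prop.eqq} from Proposition \ref{3:Prop} by detaching from the series defining $V_\e$ the single term that carries the logarithmic singularity nearest $z$. First I would dispose of the easy range: if $z$ keeps distance $\ge\s/4$ from every $a_Q$, then \eqref{3:prop.eqq} follows at once from Proposition \ref{3:Prop}, since in that range
$\s^2\bigl|\log(1-z/a_{Q_0})\bigr|=\s^2\bigl|\log|z-a_{Q_0}|-\log|a_{Q_0}|\,\bigr|\le\s^2\bigl(|\log\s|+\log(4|z|)\bigr)$,
which is absorbed into $C'|\log\s|\e^{-1}+C\s|z|$. It thus remains, exactly as announced before the statement, to treat $|z-a_{Q_0}|\le\s/4$. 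By the $\s/2$-separation of the selected points, in this range $a_{Q_0}$ is the \emph{only} one of them within $\s/4$ of $z$, so that $|z-a_Q|\ge\s/4$ for every $Q\ne Q_0$; and, replacing $Q_0$ by its mirror cell if necessary, we may assume that $Q_0$ is one of the cells over which the symmetrized sum \eqref{3:V1+} runs, so that the singularity of that sum which is close to $z$ is $\s^2\re\log(1-z/a_{Q_0})+O(1)$.

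Next I would split $V_\e(z)=\s^2\re\log\bigl(1-z/a_{Q_0}\bigr)+\tilde V_\e(z)+O(\s^2)$, where $\tilde V_\e$ is the sum \eqref{3:V1+} with the index $Q_0$ omitted; here one uses $\re\log(1-z^2/a_{Q_0}^2)=\re\log(1-z/a_{Q_0})+\re\log(1+z/a_{Q_0})$ and the fact that $\re\log(1+z/a_{Q_0})$ and $\re(z^2/a_{Q_0}^2)$ are $O(1)$ near $a_{Q_0}$, so the pieces discarded into the $O(\s^2)$ (which is $\le C\s|z|$) are harmless. Writing $\b(w)=\log(1-z^2/w^2)+z^2/w^2$ as in Subsection \ref{3}, recalling $W_\e(z)=\sum_{Q\in\Qc_1}\iint\limits_Q\b(w)\,d\m(w)$ (cf.\ \eqref{3:V-W}), and using $|Q|=\s^2$ together with $\int_Q(w-a_Q)\,d\m(w)=0$, the quantity to be estimated becomes
\begin{multline*}
V_\e(z)-\re W_\e(z)-\s^2\re\log\bigl(1-z/a_{Q_0}\bigr)\\
=\sum_{Q\in\Qc_1\setminus\{Q_0\}}\re\iint\limits_Q\bigl(\b(a_Q)-\b(w)\bigr)\,d\m(w)\;-\;\re\iint\limits_{Q_0}\b(w)\,d\m(w)\;+\;O(\s^2).
\end{multline*}

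The first sum on the right is precisely the sum \eqref{3:V-W} with the single index $Q_0$ removed. Since $z$ now lies within $\s/4$ of $a_{Q_0}$, it lies at distance $\ge\s/4$ from every remaining $a_Q$, and both singularities $w=\pm z$ of $\b$ lie outside the union of the cells $Q\in\Qc_1\setminus\{Q_0\}$; consequently every estimate made in the proof of Proposition \ref{3:Prop} — the splitting $\G_1\cup\G_2\cup\G_3$ and the bounds \eqref{3:b1.1}, \eqref{3:b2.1}, \eqref{3:b3.sum}, \eqref{b3.log.2}, \eqref{b3.log.3}, \eqref{3:b3.log.1a}, \eqref{3:b3.log.1b} — carries over to this truncated sum without change and yields $C\e|z|^2+C'|\log\s|\e^{-1}+C\s|z|$. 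The term $\re\iint\limits_{Q_0}\b(w)\,d\m(w)$ is the integral of an integrable logarithmic singularity over a set of area $\s^2$ and diameter $\le 2\s\e^{-1/2}$ on which $|w|$ is comparable to $|z|$; an elementary computation bounds it by $C\e^{-1}\s^2\bigl(|\log\s|+\log|z|+1\bigr)$, which (using the smallness of $\s$) is again within the asserted bound. Adding the three contributions gives \eqref{3:prop.eqq}.

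The only bookkeeping point that needs attention is the finite-sum bound \eqref{3:b3.log.1a}: the cells $Q\in\Qc_{1,3}$ with $|a_Q-z|\le 10\s/\sqrt\e$ still satisfy $|z-a_Q|>\s/4$ once $Q_0$ is removed — again by the $\s/2$-separation — their number remains $O(\e^{-1})$ and each still contributes $O(\s^2|\log\s|)$, so \eqref{3:b3.log.1a} holds verbatim, the sole difference from Proposition \ref{3:Prop} being that the excluded bad cell is now $Q_0$ rather than excluded by hypothesis. Apart from this, the argument is a transcription of Subsection \ref{3}; I do not expect any genuine obstacle.
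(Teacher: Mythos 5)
Your argument is correct and takes the same route as the paper's (which is essentially a one-sentence remark): separate the $Q=Q_0$ term from the sum \eqref{3:V-W}, apply Proposition \ref{3:Prop} to the remaining terms (all still valid since the $\s/2$-separation keeps $z$ at distance $\ge\s/4$ from every other $a_Q$), and bound the detached piece directly. You simply supply the bookkeeping that the paper leaves implicit — in particular the split $\log(1-z^2/a_{Q_0}^2)=\log(1-z/a_{Q_0})+\log(1+z/a_{Q_0})$ and the estimate of $\iint_{Q_0}\b(w)\,d\m(w)$ — which is in the spirit of what the authors intended.
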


The estimates \eqref{3:prop.eq1}, \eqref{3:prop.eqq}
lead to the following inequalities for the exponent of
$V_\e(z)$.
\begin{proposition}\label{3:prop3} For a positive
$\k$, we have
\begin{equation}\label{3:expV1}
    |\exp(\k V_\e(z))|\le \exp(\k\frac12|z|^2\log|z|\sin(2\e)\cos(2\p)+\e
    O(\k|z|^2)+O(\log|\s|)),
\end{equation}
as  $z$ tends to infinity along the line
$z=|z|e^{i\p}$.\end{proposition}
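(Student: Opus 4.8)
The plan is to obtain \eqref{3:expV1} essentially by exponentiating the asymptotic formulas of Propositions \ref{3:Prop} and \ref{3:prop1}. Since $V_\e(z)$ is real-valued by its very definition \eqref{3:V1}, we have $|\exp(\k V_\e(z))| = \exp(\k V_\e(z))$, so it suffices to bound $\k V_\e(z)$ from above by the argument of the exponential on the right of \eqref{3:expV1}, as $z\to\infty$ along a fixed ray $z = |z|e^{i\p}$. I would split this ray into the part that avoids all the $\tfrac{\s}{4}$-neighbourhoods of the points $a_Q$ and the part that enters the neighbourhood of some $a_{Q_0}$, and treat the two pieces separately.

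On the first piece I would insert \eqref{3:prop.eq1} and multiply through by the fixed constant $\k$, obtaining
\[
\k V_\e(z) = \k\tfrac12|z|^2\log|z|\sin(2\e)\cos(2\p) + \e\, O(\k|z|^2) + O(\k\log|\s|) + C\k\s|z|.
\]
For fixed $\k$ the term $O(\k\log|\s|)$ is $O(\log|\s|)$; and since $\s$ is a fixed parameter, $C\k\s|z| = o(|z|^2)$, so for $|z|$ large it is $\le \e|z|^2$ and is absorbed into the error $\e\, O(\k|z|^2)$. This is exactly the bound asserted in \eqref{3:expV1}.

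On the second piece, where $z$ lies in the neighbourhood of a single point $a_{Q_0}$ (by the construction of $\Qc$ at most one such point exists), I would use \eqref{3:prop.eqq} in place of \eqref{3:prop.eq1}, writing
\[
V_\e(z) = \re W_\e(z) + \re(\s^2\log(1 - z/a_{Q_0})) + R(z), \qquad |R(z)| \le C\e|z|^2 + C'|\log\s|\e^{-1} + C\s|z|.
\]
The key observation is that $\re(\s^2\log(1 - z/a_{Q_0})) = \s^2\log(|a_{Q_0} - z|/|a_{Q_0}|)$, which, as $|a_{Q_0}|$ is bounded below and $\s$ is small, is bounded \emph{from above} by $\s^2\log(\s/4)\le 0$; thus the logarithmic singularity of $V_\e$ at $a_{Q_0}$ points towards $-\infty$ and is harmless for an upper bound on $\k V_\e(z)$. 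The remaining terms $\re W_\e(z)$ and $R(z)$ are then treated as on the first piece, using the Appendix asymptotics \eqref{5:int3} for $W_\e$ and the same absorption of lower-order contributions. Combining the two pieces yields \eqref{3:expV1}. I do not expect a genuine obstacle: all the analytic content sits in Propositions \ref{3:Prop}--\ref{3:prop1} and in the Appendix, and the only delicate point, that the singularity of $V_\e$ near $a_{Q_0}$ is directed towards $-\infty$, is exactly what makes the estimate go through; the rest is bookkeeping of the fixed parameters $\k$ and $\s$ against the growing $|z|$.
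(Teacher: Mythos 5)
Your proof is correct and takes essentially the same route as the paper: split the ray $z=|z|e^{i\p}$ into the portion avoiding the $\tfrac{\s}{4}$-neighbourhoods of the $a_Q$ (where \eqref{3:prop.eq1} gives \eqref{3:expV1} directly after multiplying by $\k$ and absorbing lower-order terms) and the portion inside the neighbourhood of the unique nearby $a_{Q_0}$ (where \eqref{3:prop.eqq} is used and the logarithmic term is controlled). Your observation that $\re\bigl(\s^2\log(1-z/a_{Q_0})\bigr)=\s^2\log\bigl(|z-a_{Q_0}|/|a_{Q_0}|\bigr)\le 0$ in that neighbourhood is precisely the content of the paper's terser remark that $\bigl|\exp\bigl(\log(1-\tfrac{z}{a_{Q_0}})\bigr)\bigr|$ is bounded; you simply spell out the absorption of the $C\s|z|$ and $O(\log|\s|)$ terms a bit more explicitly.
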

\begin{proof}
If $z$ tends to infinity along the line $z=|z|e^{i\p}$
avoiding the $\s/4$-neighborhoods of the points $a_Q$,
\eqref{3:expV1} follows from \eqref{3:prop.eq1}. For
$z$ in these neighborhoods, we apply
\eqref{3:prop.eqq} and use that
$|\exp(\log\left(1-\frac{z}{a_{Q_0}}\right))|$ is
bounded.
\end{proof}
\subsection{Construction of entire functions} We return to our
initial problem. We recall that $b_2=1$ and that
$|b_1|$ is sufficiently small (how small will be
determined later). We set
   $ \kappa=\frac{c_0\sin(\a)}{\pi \sin(2\e)};\
   c_0=1-b_1.$

We chose $\s=\k^{-\frac12}$ and
   define
function $\Phi(z)$ as   the Weierstrass product:
\begin{equation*}
    \Phi(z)=\prod_{Q\in\Qc_+}\left[\left(1-\frac{z^2}{a_Q^2}\right)
    \exp\left(\frac{z^2}{a_Q^2}\right)\right].
\end{equation*}
We also denote
\begin{equation*}
    \Phi_\a(z)=\F\left(ze^{-i\frac{\a+\pi}{2}}\right).
\end{equation*}
By our choice of $\s$, this function is related to the
function $V_\e$ considered in Section~\ref{3}:
\begin{equation*}
    \log\left|\F_\a(z)\right|^2=\s^{-2} V_\e\left(ze^{-i\frac{\a+\pi}{2}}\right)=\k V_\e\left(ze^{-i\frac{\a+\pi}{2}}\right).
    \end{equation*}
Therefore, by \eqref{3:prop.eq1}, with $z=re^{i\p}$
\begin{equation}\label{4:FiVfor}\begin{split}
 \log\left|\F_\a(z)\right|^2\le\frac12 |z|^2\log|z|\sin(2\e)\k
  \cos(2(\p-\frac{\a+\pi}{2}))\\
 +\k(\e+1)O(|z|^2)=
 -\frac{c\sin{\a}}{2\pi}|z|^2\log|z|\cos(2(\p-\a/2))\\+\sin\a
 (\e+1)/\sin(2\e)O(|z|^2).\end{split}\end{equation}
With $\e$ chosen as $\a^{1/2}$ (thus
$\s\sim\a^{-\frac12}$), we have
\begin{equation*}
\sin\a
 (\e+1)/\sin(2\e)\le C\a^{1/2}.
\end{equation*}

For a function $G(z)$, to be specified later, we
consider the integral
 \begin{equation}\label{integral1}
    I(G)=\iint\limits_{\C}e^{2F(z)}|\F_\e(z)|^2 |G(z)|^2
    d\m(z)=\iint\limits_{\C}e^{2F(z)-2\log|\F_\e(z)|}|G(z)|^2
    d\m(z).
\end{equation}
From  the estimates for $F$ in \eqref{2:Final} and for
$\log\F$ in \eqref{4:FiVfor}, we see that the terms
with  $|z|^2\log|z|$  in  the exponent in
\eqref{integral1} cancel and therefore
\begin{equation}\label{integral2}\begin{split}
    I(G)\le
    C\iint\limits_{\O_1}e^{|b_1|x_2^2+c_0\a^{1/2}|z|^2}|G(z)|^2d\m(z)\\
    +C\iint\limits_{\O_2}e^{-x_2^2+c_0\a^{1/2}|z|^2}|G(z)|^2d\m(z)
    =I_1(G)+I_2(G).
\end{split}\end{equation}
Now we choose the function $G(z)$. We take it in the
form
\begin{equation*}
    G(z)=\exp(-1/4\left(ze^{-i\frac{\a}{2}}\right)^2)P(z),
\end{equation*}
where $P(z)$ is an arbitrary polynomial. Then
\eqref{integral2} implies
\begin{equation*}
    \begin{split}
    I_1(G)\le
    C\iint_{\O_1}e^{|b_1|\sin^2\a|z|^2+c_0\a^{1/2}|z^2|-\frac12|z|^2\cos2(\p-\a/2)}d\m(z)\\
    \le\iint_{\O_1}e^{(|b_1|\a^2+c_0\a^{1/2})|z^2|-\frac12
    |z^2|\cos(\a)}|P(z)|^2d\m(z)<\infty,
    \end{split}
\end{equation*}
as soon as $\a, |b_1|\a$ are sufficiently small.
Further on, we split the integral $I_2(G)$ as
$I_2(G)'+I_2(G)''$, so that $I_2(G)'$ involves
integration over the region in $\O_2$ where $|\arg
z|<\a^{1/5}$ or  $|\arg z-\pi|<\a^{1/10}$, and
$I_2(G)''$ involves the integration over the rest of
$\O_2$, i.e., the region where $|\arg z|>\a^{1/10}$and
$|\arg z-\pi|>\a^{1/5}$. This gives us
\begin{equation}\label{integral4}\begin{split}
I_2(G)=I_2(G)'+I_2(G)''\le\\
\iint_{\O_2}e^{-\frac12\cos(\frac32
\a^{1/5})|z|^2+c_0\a^{1/2}|z|^2}|P(z)|^2
d\m(z)+\\\iint_{\O_2}e^{-\sin^2\a^{1/5}|z|^2+c_0\a^{1/2}|z|^2}|P(z)|^2
d\m(z).\end{split} \end{equation} Both integrals in
\eqref{integral4} converge,  again, as soon as $\a$ is
small enough.

Thus any entire analytical  function $u(z)$ of the
form
\begin{equation*}u(z)=\F_\a(z)\exp(-1/4\left(ze^{-i\frac{\a}{2}}\right)^2)P(z)\end{equation*}
belongs to $L_2(\C)$ with weight $2F(z)$ and therefore
the dimension of the corresponding subspace is
infinite.
\section{Nonexistence of zero modes}\label{nonex}
In this Section we prove the following theorem about
the non-existence of zero modes.
\begin{theorem}\label{thm:nonex}
 Suppose that  the angle $\a$ is
sufficiently close to $\pi$ and $|b_1|<\frac12$. Then
the space of analytical functions $u$ satisfying
$u\exp(\pm F)\in L_2$ consists only of the zero
function.\end{theorem}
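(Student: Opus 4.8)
The plan is to exploit the same explicit formula \eqref{2:Final} for $F$, but now in the opposite regime: when $\alpha$ is close to $\pi$ the two sectors $\Omega_1$ and $\Omega_2$ are both close to half-planes, and the leading $|z|^2\log|z|$ term of $F$ changes sign across the four rays $\arg z=\frac\alpha2+k\frac\pi4$. An entire function $u$ with $u e^{F}\in L_2$ must decay fast where $F$ is large and positive (which is a genuine region, a full quarter), while an entire function $u$ with $u e^{-F}\in L_2$ must decay fast where $F$ is large and negative (again a full quarter). So in either case $u$ is an entire function that is $L_2$ against a weight growing like $\exp(c|z|^2\log|z|)$ on a sector of opening $\pi/2$. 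The strategy is the classical Phragmén–Lindelöf / uniqueness mechanism: an entire function that is square-integrable in a sector against such a super-Gaussian weight must vanish identically, regardless of its size outside that sector. First I would make this precise as a lemma.

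\medskip

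The key steps are as follows. First, from \eqref{2:Final}, writing $z=re^{i\varphi}$, isolate the dominant term
$$
F(z)=-\frac{c_0\sin\alpha}{2\pi}\,r^2\log r\,\cos\bigl(2(\varphi-\tfrac\alpha2)\bigr)+\varphi(\varphi)\,r^2,
$$
and identify one open sector $S^+$ of opening $\pi/2$ (say around $\arg z=\tfrac\alpha2+\tfrac\pi2$) where $\cos(2(\varphi-\tfrac\alpha2))\le-\delta<0$, so that $F(z)\ge \tfrac{c_0\sin\alpha}{4\pi}\delta\,r^2\log r - C r^2$ there; and a companion sector $S^-$ where $F(z)\le -\tfrac{c_0\sin\alpha}{4\pi}\delta\,r^2\log r+Cr^2$. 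Since $b_2=1$ and $|b_1|<\tfrac12$ we have $c_0=1-b_1\in(\tfrac12,\tfrac32)>0$, so the coefficient is bounded below; and $\sin\alpha>0$ for $\alpha<\pi$. Second, prove the uniqueness lemma: if $u$ is entire and $\iint_{S}|u(z)|^2 e^{2\gamma r^2\log r}\,d\mu<\infty$ for some sector $S$ of opening $\theta_0>0$ and some $\gamma>0$, then $u\equiv0$. This is where the real work lies. The natural route is to bound $|u(z)|$ pointwise inside a slightly smaller sector $S'$ by the $L_2$ norm via the sub-mean-value property applied to a small disk $D(z,1)$ — here one uses that on $D(z,1)$ the weight $e^{2\gamma r^2\log r}$ is comparable (up to $e^{O(r\log r)}$) to its value at $z$, yielding $|u(z)|\le C\,\|u\|\,e^{-\gamma r^2\log r + O(r\log r)}$ for $z\in S'$. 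Then one runs a Phragmén–Lindelöf argument: $u$ is entire, hence of finite... no — rather, one applies the Phragmén–Lindelöf principle in the complementary sector together with the super-polynomial decay $e^{-\gamma r^2\log r}$ along the edges of $S'$; comparing against $\exp(\epsilon z^{p})$ for suitable $p$ larger than the (a priori possibly infinite) order forces, via a standard indicator/Carleman-type estimate or directly via Liouville after multiplying by $e^{\lambda z^2}$ to absorb the $r^2$ and using the extra $\log r$ gain, that $u$ decays in all directions, hence is bounded, hence constant, hence (by the decay) zero.

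\medskip

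Concretely, I would phrase the final step as: set $g(z)=u(z)e^{\lambda z^{2}}$ with $\lambda$ chosen (complex, rotated to match the bisector of $S'$) so that $\re(\lambda z^2)\sim -\gamma r^2$ in $S'$; then on $S'$ one has $|g(z)|\le C e^{-\tfrac12\gamma r^2\log r}$, which beats any $e^{Cr^2}$, and on the rest of the plane $|g(z)|\le |u(z)|e^{Cr^2}$. A Phragmén–Lindelöf argument on each of the two sectors complementary to $S'$ (each of opening $<2\pi-\theta_0$, so of opening $<2\pi$; if necessary subdivide so each has opening $<\pi$ to be safe) then propagates a bound $|g(z)|\le C$ on all of $\C$ once we know $g$ has order at most $2$ — but a priori we do not, so instead I would first get the order bound: the $L_2$ condition forces, by the disk-averaging estimate above applied throughout $S'$, and by the trivial bound plus Phragmén–Lindelöf elsewhere, that $\log|u(z)| = O(r^2\log r)$ globally, hence $u$ has order $\le 2$ (strictly, order $2$ with type considerations, or order $2^+$; the $\log$ is harmless for what follows). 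With finite order in hand, Phragmén–Lindelöf applies legitimately, $g$ is bounded and entire, hence constant, and the decay on $S'$ forces $g\equiv0$, so $u\equiv0$. Applying this with $S=S^+$ in the case $ue^{F}\in L_2$ and with $S=S^-$ in the case $ue^{-F}\in L_2$ finishes the theorem.

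\medskip

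The main obstacle is the uniqueness lemma, and within it the fact that we have no a priori growth bound on $u$ outside the good sector, so Phragmén–Lindelöf cannot be invoked until a global finite-order estimate is bootstrapped from the local $L_2$ decay; getting that bootstrap clean — i.e. turning "small in $S'$, entire everywhere" into "order $\le 2$ everywhere" without circularity — is the delicate point, and it is also the place where, as the introduction warns, the closeness of the sign-sets of $F$ to quarter-planes means the $\log r$ factor is exactly what must be tracked to close the estimate rather than being absorbed into an $O(r^2)$.
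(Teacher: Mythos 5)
Your diagnosis of where the difficulty lies is correct (the sign-sets of $F$ are close to quarter-planes, and the $\log$-quadratic factor must be tracked rather than absorbed), but the proposed mechanism for closing the argument has two genuine gaps, and it differs fundamentally from what the paper does.

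First, the ``uniqueness lemma'' you state --- \emph{if $u$ is entire and $\iint_{S}|u|^2 e^{2\gamma r^2\log r}\,d\mu<\infty$ for some sector $S$ of opening $\theta_0>0$, then $u\equiv0$} --- is false as stated. For instance, take $u(z)=e^{-z^4}$ and $S$ a subsector of $\{|\arg z|<\pi/8\}$: on $S$ one has $|u(z)|^2 e^{2\gamma r^2\log r}=e^{-2r^4\cos(4\varphi)+2\gamma r^2\log r}$ with $\cos(4\varphi)$ bounded below, so the integral converges, yet $u\not\equiv0$. Your lemma can only hold for sectors whose opening is large enough relative to the weight, and in the present problem that threshold is exactly $\pi/2$ (after doubling, the half-plane). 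Your own setup already undercuts this threshold: to get $\cos\bigl(2(\varphi-\tfrac\alpha2)\bigr)\le-\delta<0$ uniformly you must shrink $S^+$ strictly below the quarter-plane, whereas at the boundary rays of the quarter-plane that cosine vanishes. So you are applying the lemma precisely where it is least likely to be true, and there is no room in the Phragm\'en--Lindel\"of indicator calculus to recover: the decay $e^{-\gamma r^2\log r}$ is still of order $2$, so an order-$2$ indicator that is $-\infty$ on a sector of opening $<\pi/2$ does not, by itself, force $u\equiv0$.

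Second, even granting a correct version of the lemma, the bootstrap step ``$L_2$-smallness of $u$ in one sector $\Rightarrow$ $\log|u|=O(r^2\log r)$ globally $\Rightarrow$ Phragm\'en--Lindel\"of applies'' is unproved, and you flag this yourself as the delicate point. This is not a minor polish: with no a priori information on $u$ outside $S$, there is no way to start Phragm\'en--Lindel\"of at all, since the principle presupposes a finite-order bound on the boundary and in the sector. Multiplying by $e^{\lambda z^2}$ redistributes order-$2$ growth but cannot manufacture an order bound from nothing.

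The paper avoids both obstacles by a different route. It does not try to exploit the $r^2\log r$ gain in the interior of the quarter-plane at all; it uses only the weaker bound $-F\ge C|z|^2$ (resp. $\tilde F\ge C|z|^2$), but crucially establishes it on a domain $S'$ \emph{slightly larger than} the quarter-plane, with angular excess $\sim A/\log r$ (inequalities \eqref{4A:2}--\eqref{4A:3}). It then squares ($\zeta=(ze^{i\frac74\pi})^2$), so $S'$ becomes slightly larger than a half-plane with weight $e^{|\zeta|}$; maps it conformally back onto an exact half-plane via the explicitly constructed univalent map $\zeta(\o)=\o(\log\o-\tfrac{\pi i}{2})^A$ of Proposition~\ref{4:unival}; under this map the logarithmic angular excess converts into a logarithmic gain in the weight, producing $e^{\frac{C}{4}|\o||\log|\o||^A}$, which is of the form $e^{x_2h(x_2)}$ with $h\to\infty$; and finally invokes Proposition~\ref{nonex.half}, a Fourier/Poisson-kernel uniqueness statement for the half-plane that requires no a priori growth control on $u$ and no Phragm\'en--Lindel\"of. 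The second sector $T$ needs the extra trick of subtracting $\re H(z)$ with $H(z)=-\tfrac{i}{6}e^{-\a}z^2$ to make $\tilde F=F-\re H$ positive on the relevant boundary rays; your sketch does not address the analogous sign issue.

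In short: the paper's proof replaces your Phragm\'en--Lindel\"of/bootstrap step by a conformal-mapping plus Fourier-representation argument, and it replaces your $r^2\log r$-in-a-smaller-sector bound by an $r^2$-in-a-logarithmically-larger-sector bound. Your proposal recognizes the right difficulty but leaves the decisive step (the uniqueness mechanism) unproven and, as stated, false.
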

\subsection{A half-plane}\label{half-plane} We consider the case of $\a=\pi$ first.
 So, let us have $B(x)=2b_1<0$ in the
half-plane $\C^+=x_2 >0$ and $B(x)=2b_2=2$ in the
half-plane $\C^-=x_2<0$. The potential, the solution
of the equation $\D F=B$ can be taken in the form
\begin{equation*}
    F(z)=b_1 x_2^2,x_2>0;
   F(z)=x_2^2,x_2<0.
\end{equation*}
We will show  that no nontrivial entire analytical
function $u(z)$ can belong to $L_2$ with weight
$e^{F(z)}$ or $e^{-F(z)}$. Actually, a more general
statement is correct.
\begin{proposition}\label{nonex.half}
Let $h(s), s\ge0$ be a positive function,
$h(s)\to\infty$ as $s\to\infty$. Then the set of
functions $u(z)$, analytical in the half-plane $x_2>0$
and continuous up to the boundary such that
\begin{equation}\label{4:finiteInt:eq}
    \iint_{\C^+}e^{x_2h(x_2)}|u(z)|^2 d\l(z)<\infty
\end{equation}
consists only of a zero function.
\end{proposition}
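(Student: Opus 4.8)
The strategy is to reduce everything to the one–dimensional $L^2$–means $g(y):=\int_{\R}|u(x_1+iy)|^2\,dx_1$ and then bring in holomorphy through the Paley--Wiener description of the Hardy class of a half–plane. First, we may assume $h$ is nondecreasing, replacing it otherwise by $s\mapsto\inf_{\sigma\ge s}h(\sigma)$, which is still positive, still tends to $\infty$, and only makes the weight smaller. By Tonelli's theorem the hypothesis \eqref{4:finiteInt:eq} reads $\int_0^\infty g(y)\,e^{yh(y)}\,dy=M<\infty$, and since $e^{yh(y)}\ge1$ for $y>0$ this already gives $g\in L^1(0,\infty)$ with $\|g\|_{L^1(0,\infty)}\le M$.

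Second, I would upgrade this $L^1$–information to pointwise and then uniform bounds via subharmonicity. Since $|u|^2$ is subharmonic in $\C^+$, the sub–mean–value inequality over a disc $D((x_1+iy),r)$ with $0<r<y$, integrated in $x_1\in\R$ and rearranged by Fubini, yields $g(y)\le\frac{2}{\pi r}\int_{y-r}^{y+r}g(\eta)\,d\eta$. Taking $r$ small shows $g(y)<\infty$ for every $y>0$; taking $r=\varepsilon/2$ and $y\ge\varepsilon$ shows $\sup_{y\ge\varepsilon}g(y)\le\frac{4}{\pi\varepsilon}\|g\|_{L^1(0,\infty)}<\infty$ for each $\varepsilon>0$. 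Consequently, for every fixed $\varepsilon>0$ the shifted function $u_\varepsilon(z):=u(z+i\varepsilon)$ belongs to $H^2(\C^+)$.

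Third, fix $\varepsilon>0$ and apply Paley--Wiener to $u_\varepsilon$: there is $\phi\in L^2(0,\infty)$ with $u_\varepsilon(z)=\int_0^\infty\phi(t)e^{izt}\,dt$ and $G(y):=g(y+\varepsilon)=c\int_0^\infty|\phi(t)|^2e^{-2yt}\,dt$ for $y\ge0$ (with $c>0$ depending on the transform normalization), a function continuous on $[0,\infty)$. Equivalently: Cauchy--Riemann gives $\widehat{u(\cdot+iy)}(\xi)=e^{-\xi(y-\varepsilon)}\,\widehat{u(\cdot+i\varepsilon)}(\xi)$, and finiteness of $g(y)$ for all $y\ge\varepsilon$ forces $\widehat{u(\cdot+i\varepsilon)}$ to vanish on $(-\infty,0)$, which is exactly the content just quoted. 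Now I feed the weight back in: $\int_0^\infty G(y)\,e^{(y+\varepsilon)h(y+\varepsilon)}\,dy=\int_\varepsilon^\infty g(s)e^{sh(s)}\,ds\le M$, and since $h\to\infty$ one has, for any prescribed $c_0>0$, that $(y+\varepsilon)h(y+\varepsilon)\ge c_0 y$ once $y$ is large; combined with continuity of $G$ near $0$ this yields $\int_0^\infty G(y)e^{c_0 y}\,dy<\infty$ for every $c_0>0$. On the other hand, if $\phi\not\equiv0$ we may pick $\delta>0$ with $m:=\int_0^\delta|\phi(t)|^2\,dt>0$, whence $G(y)\ge c\,m\,e^{-2\delta y}$ and $\int_0^\infty G(y)e^{3\delta y}\,dy=\infty$ — contradicting the previous sentence with $c_0=3\delta$. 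Hence $\phi\equiv0$, so $u\equiv0$ on $\{\,\im z>\varepsilon\,\}$; by the identity theorem $u\equiv0$ on $\C^+$, and then on $\overline{\C^+}$ by the assumed continuity.

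The two load–bearing points are: the implication ``$g\in L^1$ together with subharmonicity of $|u|^2$ $\Rightarrow$ $\sup_{y\ge\varepsilon}g(y)<\infty$'', which is the only place holomorphy in the half–plane is really used to get $u_\varepsilon\in H^2$; and the Paley--Wiener step, whose \emph{exact} formula exhibits $g(y+\varepsilon)$ as the Laplace transform of the nonnegative density $c|\phi|^2$, so that a super–exponential weight on it is impossible unless it vanishes. All the rest is bookkeeping, arranged so these two facts collide. The one subtlety to keep in mind is that no control is available near the boundary $x_2=0$, where the weight $e^{x_2h(x_2)}$ degenerates to $1$; this is precisely why one must work with $u_\varepsilon$ for $\varepsilon>0$ and only recover the statement for $u$ on all of $\C^+$ at the very end by unique continuation.
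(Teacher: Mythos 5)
Your proof is correct, and it rests on the same mechanism as the paper's: express the horizontal $L^2$-means $g(y)$ as a Laplace-type transform of a nonnegative spectral density, so that $g$ decays at most exponentially, which is incompatible with a super-exponential weight unless $g\equiv 0$. The technical routes differ slightly. The paper writes $u$ on $\{x_2>y_0\}$ as the bounded harmonic Poisson extension of its $L^2$ trace $u_{y_0}$, giving (after Plancherel) $g(x_2)=c\int e^{-2(x_2-y_0)|\xi|}|\widehat{u_{y_0}}(\xi)|^2\,d\xi$ and then observing directly that the weighted integral diverges for each $\xi$; you instead shift into the domain, use subharmonicity of $|u|^2$ to show $g$ is bounded on $\{y\ge\varepsilon\}$, conclude $u_\varepsilon\in H^2(\C^+)$, and apply Paley--Wiener. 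Both give the same representation of $g$. Your version fills in two points the paper leaves implicit: the subharmonic sub-mean-value step is what actually justifies that $u_{y_0}\in L^2(\R)$ for \emph{every} $y_0>0$ (the hypothesis only gives this for a.e.\ $y_0$ without it) and that $u$ is bounded at infinity, and your monotone reduction of $h$ makes the final comparison clean. These are improvements in rigor rather than a different idea; the core argument is the same.
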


It is clear that the absence of nontrivial  entire
functions in the case we started with   follows from
Proposition \ref{nonex.half} applied separately to
half-planes $\C^+$ and $\C^-$. Moreover, Proposition
\ref{nonex.half} will be the destination point for
other configurations of $B$ to be considered: having
obtained a lower estimate for the function $F$ in some
domain, we make a conformal mapping of this domain
onto the upper half-plane, where the Proposition can
be applied.

\begin{proof} It follows from the condition \eqref{4:finiteInt:eq}
that for any fixed $y_0>0$, the function
$u_{y_0}(x_1)=u(x_1+iy_0)$ belongs to $L_2(\R^1)$ as a
function of $x_1$, moreover, $u(x_1+ix_2)$ tends to
$0$ as $x_2\to\infty$, uniformly in $x_1\in\R^1$. Thus
$u(x_1+ix_2)$ is a bounded harmonic function in a
half-plane $\C_+^{y_0}={x_2\ge y_0}$ with boundary
values in $L_2(\R^1)$, Such function can be expressed
by means of the Fourier transform:
\begin{equation*}
    u(x_1+ix_2)=\Fc^{-1}_{\xi\to x_1}
    e^{-(x_2-y_0)|\xi|^2}\widehat{u_{y_0}}(|\xi|).
\end{equation*}
Therefore,
\begin{equation*}
    \iint_{x_2>y_0}e^{x_2h(x_2)}|u(x_1+ix_2)|^2d\m(z)=
    \iint_{x_2>y_0}e^{x_2h(x_2)-2(x_2-y_0)|\xi|}|\widehat{u_{y_0}}(\xi)|^2d\xi d
    x_2,
\end{equation*}
and since $h(s)\to\infty$ as $s\to\infty$, the
integral diverges unless $u_{y_0}\equiv 0$ for any
$y_0>0$.
\end{proof}

The case of a sector configuration of $B$ will be
reduced to Proposition \ref{nonex.half} by means of a
conformal mapping. The following subsection will be
devoted to the proof that such special mapping  is, in
fact univalent.

\subsection{Univalentness property}
\begin{proposition}\label{4:unival}
 Denote for $R>0$ by $\C^{+}_R$ the
upper half-plane $\C^+$ with the the disk $|z|\le R$
removed. Then for any $A\ne0$ there exists a number
$R_A>1$ such that the function
\begin{equation*}
    \z(\o)=\o(\log \o -\frac{\pi}{2}i)^A
\end{equation*}
is analytical and univalent in $\C^+_{R_A}$ and maps
it onto a set $\O_A\subset\C$. The boundaries of
$\O_A$ are described by
\begin{equation}\label{4:domain}
   \begin{split}
    \vk = -\frac{\pi A}{2\log\r}+O(\frac{\log\log\r}{\log^2\r}),\ \z=\r e^{i\vk},\vk \ {\rm near }\ 0;\\
      \vk=  \pi +\frac{\pi A}{2\log\r}+O(\frac{\log\log\r}{\log^2\r}), \
\z=\r e^{i\vk}\vk \ {\rm near}\ \pi.
\end{split}
\end{equation}
In other words, the function $\z(\o)$ maps conformally
the upper half-plane, with a disk cut away, onto a
slightly, logarithmically, deformed half-plane, with a
compact set cut away.
\end{proposition}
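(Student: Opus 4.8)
The plan is to pass to the logarithmic variable, in which $\z$ becomes an almost linear map. Since $\log$ takes the open upper half-plane $\C^+$ conformally onto the strip $\{0<\im w<\pi\}$, set $w=\log\o$; then $|\o|>R$ corresponds to $\re w>\log R$, so $\C^+_R$ is carried onto the \emph{convex} half-strip $S_R=\{w:\ 0<\im w<\pi,\ \re w>\log R\}$. In this variable $\z(\o)=e^{\Psi(w)}$ with
\begin{equation*}
  \Psi(w)=w+A\log\!\Big(w-\tfrac{\pi}{2}i\Big).
\end{equation*}
For $R\ge 1$ the point $w-\tfrac{\pi}{2}i$ has positive real part throughout $S_R$, so $\log(w-\tfrac{\pi}{2}i)$ and its $A$-th power are on principal branches and $\Psi$, hence $\z$, is analytic on $\C^+_R$. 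The problem is thereby reduced to showing that $e^{\Psi}$ is univalent on $S_R$ for $R$ large and to reading off the images of the boundary arcs.

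Univalence I would prove in two steps. First, $\Psi$ itself is one-to-one on $S_R$: it is holomorphic on the convex set $S_R$ and
\begin{equation*}
  \re\Psi'(w)=1+\re\frac{A}{\,w-\tfrac{\pi}{2}i\,}\ \ge\ 1-\frac{|A|}{\re w}\ \ge\ 1-\frac{|A|}{\log R}\ >\ \tfrac12
\end{equation*}
once $\log R>2|A|$, so $\Psi$ is univalent by the Noshiro--Warschawski criterion (integrate $\Psi'$, whose real part is positive, along the segments of $S_R$). Second, $w\mapsto e^{w}$ is injective on $\Psi(S_R)$: one has $\im\Psi(w)=\im w+A\arg(w-\tfrac{\pi}{2}i)$ with $\im w\in(0,\pi)$ and $|\arg(w-\tfrac{\pi}{2}i)|\le\arctan\frac{\pi}{2\log R}$, so $\im\Psi$ stays in an interval of length $\pi+2|A|\arctan\frac{\pi}{2\log R}$, which is strictly below $2\pi$ as soon as $R$ is large enough (depending on $A$); on a horizontal strip of width $<2\pi$ the exponential is injective. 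Composing with $\log$, which is injective on $\C^+$, shows that $\z=e^{\Psi}\circ\log$ is univalent on $\C^+_{R_A}$ for a suitable $R_A=R_A(A)>1$. I expect this to be the heart of the matter: local univalence is immediate because the factor $(\log\o-\tfrac{\pi}{2}i)^A$ varies slowly, but that slow variation by itself does not exclude the hidden $2\pi ik$ periods of the logarithm, and it is exactly the narrowness of the strip $\Psi(S_R)$ — which forces $R_A$ to be large relative to $|A|$ — that rules them out.

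Next I would identify $\O_A=\z(\C^+_{R_A})$ and its boundary. The boundary $\partial\C^+_{R_A}$ consists of the two real rays $(-\infty,-R_A]$ and $[R_A,\infty)$ together with the upper half-circle $\{|\o|=R_A,\ \im\o\ge0\}$; the latter, being compact, goes over into a compact Jordan arc joining $\z(R_A)$ to $\z(-R_A)$ — the ``compact set cut away''. On the ray $\o=x\ge R_A$ one has $w=\log x$ real and $\z=x(\log x-\tfrac{\pi}{2}i)^A$, whence $\arg\z=A\arg(\log x-\tfrac{\pi}{2}i)=-\frac{\pi A}{2\log x}+O((\log x)^{-3})$ and $|\z|=x(\log x)^A\bigl(1+O((\log x)^{-2})\bigr)$. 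Writing $\r=|\z|$, $\vk=\arg\z$ and inverting $\log\r=\log x+A\log\log x+O((\log x)^{-2})$ — which gives $\log x=\log\r\,(1+O(\tfrac{\log\log\r}{\log\r}))$, hence $\tfrac1{\log x}=\tfrac1{\log\r}+O(\tfrac{\log\log\r}{(\log\r)^{2}})$ — one obtains the first line of \eqref{4:domain}. On the ray $\o=-x\le-R_A$ one has $w=\log x+\pi i$, $\z=-x(\log x+\tfrac{\pi}{2}i)^A$, so $\arg\z=\pi+\frac{\pi A}{2\log x}+O((\log x)^{-3})$, and the same inversion yields the second line.

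Finally, since $|\z(\o)|\to\infty$ as $|\o|\to\infty$ and $\z$ extends continuously to $\overline{\C^+_{R_A}}$ (because $\log$ does, the disk around the origin having been removed), $\z$ restricts to a homeomorphism of the boundary curve onto its image, while it is univalent in the interior; hence $\z$ is a conformal map of $\C^+_{R_A}$ onto the unbounded component of $\C\setminus\z(\partial\C^+_{R_A})$, namely the half-plane deformed according to \eqref{4:domain} with the compact arc removed. This is the asserted set $\O_A$.
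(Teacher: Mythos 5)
Your proof is correct, and it differs from the paper's in an instructive way. Both proofs pass to the logarithmic variable and reduce matters to a half-strip, but where you factor the map as $\z=\exp\circ\,\Psi$ with $\Psi(w)=w+A\log(w-\tfrac{\pi}{2}i)$ and treat the two factors separately, the paper works directly with $\z(z)=e^z z^A$ (in the centered variable $z=\log\o-\tfrac{\pi}{2}i$, so $\o=ie^z$). Your factorization pays off: since $\re\Psi'>\tfrac12$ on the whole convex half-strip once $\log R>2|A|$, a single application of Noshiro--Warschawski shows $\Psi$ is univalent, and the $2\pi i$-periodicity of $\exp$ --- the only other way injectivity could fail --- is ruled out by noting $\im\Psi$ stays within an interval of length $\pi+2|A|\arctan\tfrac{\pi}{2\log R}<2\pi$. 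The paper cannot apply Noshiro--Warschawski to $\z$ on the full half-strip because $\arg\z'(z)$ swings by roughly $\pi$ across it; instead it covers the half-strip by substrips of width $0.8\pi$, proves univalence on each via a reference-argument version of the Noshiro--Warschawski integral, and then separately handles pairs of points with $|\im(z_1-z_2)|\ge0.8\pi$ by comparing $\arg\z$. Both arguments thus have a ``nearby points'' step and a ``distant points'' step, but your decomposition makes that split structural rather than ad hoc, and in particular makes the dependence of $R_A$ on $|A|$ transparent. Your boundary asymptotics and the inversion $\tfrac1{\log x}=\tfrac1{\log\r}+O(\tfrac{\log\log\r}{\log^2\r})$ match \eqref{4:domain}; the paper leaves this part to the reader. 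The closing argument-principle step identifying $\O_A$ as the unbounded complementary component is stated informally in both treatments, which is acceptable at this level of detail.
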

\begin{proof} The fact that the function $\xi$ is
analytical in the domain $\C^+_{R_A}$ and the
asymptotic expressions \eqref{4:domain} for the
mapping of the boundaries follow directly from the
definition of the function. What, actually, requires
being checked is that the function is univalent for
$R=R_A$ sufficiently large. We start with an
intermediate mapping onto a strip. Set
\begin{equation}\label{4:intermed}
    z=z(\o)=\log\o -\frac\pi2 i.
\end{equation}
The mapping \eqref{4:intermed} transforms $\C^+$ onto
the strip $\{z=x+iy: |y|<\frac\pi2\}$ and the domain
$\C_R$ onto the half-strip  $\Pi_\varsigma=\{z=x+iy:
|y|<\frac\pi2, x>\varsigma\}$, $\varsigma=\log R$.
Since $\o=ie^z$, it is sufficient to check that the
function $\z(z)=e^z z^A$ is univalent in
$\Pi_\varsigma$ for $\varsigma$ large enough.

We choose $\varsigma$ so that $\varsigma>2\pi$ and
moreover
\begin{equation}\label{4:strip1}
    |\arg (z^A)|<\frac{\pi}{40},\
    |\arg(1+Az^{-1})|<\frac{\pi}{40},\ z\in \Pi_\varsigma.
\end{equation}
We show first that the function $\z(z)$ is univalent
in any substrip
\begin{equation*}
D=\{z=x+iy: \ x>\varsigma, |y-y_D|<0.4\pi\},
\end{equation*}
such that $\varsigma$ satisfies \eqref{4:strip1} and
$D\subset \Pi_\varsigma$. Let $ z_D=\varsigma+1+iy_D,$
\begin{equation}\label{4:strip3}
\n_D=\arg(\z'(z_D)) =y_D+\arg (z_D^A)+\arg(1+A/z_D).
\end{equation}
By \eqref{4:strip1}, for any $z=x+iy\in D$, we have
\begin{gather}\label{4:strip4}
    |\arg \z'(z)-\arg \z'(z_D)| \\ \nonumber
    \le |y-y_D|+|\arg z^A|+|\arg(1+A/z)|+|\arg
    z_D^A|+|\arg(1+A/z_D)|\le \\\nonumber
     0.4\pi+4\cdot\frac{\pi}{40}=\frac{\pi}{2}.
\end{gather}
Now let $z_1,z_2$ be some points in $D$,
$\frac{z_2-z_1}{|z_2-z_1|}=e^{i\h}$. Then we have
\begin{equation*}\begin{split}
    \z(z_2)-\z(z_1)=\int\limits_{[z_1,z_2]}\z'(t)dt=e^{i\h}\int\limits_0^{|z_2-z_1|}\z'(z_1+\t
    e^{i\h})d\t\\=e^{i(\h+\n_D)}\int\limits_0^{|z_2-z_1|}e^{-i\n_D}\z'(z_1+\t
    e^{i\h})d\t,
\end{split}
\end{equation*}
and therefore, by \eqref{4:strip3}, \eqref{4:strip4}
\begin{gather*}
    |\z(z_1)-\z(z_2)|=\left|\int\limits_0^{|z_2-z_1|}e^{-i\n_D}\z'(z_1+\t
    e^{i\h})d\t\right|\\\nonumber \ge\left|\int\limits_0^{|z_2-z_1|}\re\left(e^{-i\n_D}\z'(z_1+\t
    e^{i\h})\right)d\t\right|>0.
\end{gather*}
Now we consider the whole half-strip $\Pi_\varsigma$.
Let us  take arbitrary points $z_1,
z_2\in\Pi_\varsigma$. If $|\im(z_1-z_2)|<0.8\pi$ then
$z_1,z_2$ lie in some half-strip $D$ of the type just
considered and thus $\z(z_1)=\z(z_2)$ is impossible.
On the other hand, if $\im(z_2-z_1)\ge0.8\pi$, we have
\begin{equation*}
\arg\z(z_2)-\arg\z(z_1)=\im(z_2-z_1)+(\arg(z_2^A)-\arg(z_2^A))>0.8\pi-\frac{2\pi}{40}>0,
\end{equation*}
and again $\z(z_1)=\z(z_2)$ is impossible.
\end{proof}

\subsection{Estimates for a large angle}\label{LargeAngle-II}
We consider the case when in the setting of Section
\ref{general} the  angle $\a$ is  close to $\pi$.
 We are going  to show  that if $\theta=\pi-\a$ is
small enough then there are no nontrivial entire
functions $u(z)$ such that $u\exp(F)$ or $u\exp(-F)$
belong  to $L_2(\C)$. To do this, we consider two
adjoining sectors $\X_\pm$ with slightly curved
boundaries, where the functions $\pm F$ are positive.
By performing the conformal mapping of $\X_\pm$ onto
the upper half-plane, we arrive at the situation
described in Subsection \ref{half-plane}. Recall that
in polar coordinates $r,\p$ the function $F(z)$ has
the form
\begin{equation*}
    F(re^{i\p})=\f(z)-\frac{c_0\sin{\theta}}{2\pi}r^2\log
    r\cos(2(\p-\a/2))+\frac{c_0\sin{\theta}}{2\pi}\p\sin(2(\p-\frac{\a}{2}))r^2.
\end{equation*}
Recall that the polar angle $\p$ lies in
$[\a,\a+2\pi)$ in this representation, $c_0=1+|b_1|$.

We suppose that $\a$ is close to $\pi$ and  consider
two sectors in the complex plane,
$S=\{z:\p\in(\frac{\a}{2}+\frac74\pi,\frac{\a}{2}+\frac94\pi)
\}$ and
$T=\{z:\p\in(\frac{\a}{2}+\frac94\pi,\a+2\pi)\cap[\a,\frac{\a}{2}+\frac34\pi)\}=T_1\cup
T_2$. In these sectors the log-quadratic part of $F$
is negative, resp. positive.

Consider the sector $S$. If $z=re^{i\p}\in S$, we have
$z\in\O_1$, $\f(z)=b_1r^2\sin^2\p$, and therefore
\begin{gather}\label{4A:0}
-F(z)=\left( |b_1|\sin^{2}\p
-\frac{c_0\sin{\theta}}{2\pi}\p\sin(2(\p-\frac{\a}{2}))\right)r^2\\
\nonumber
+\frac{c_0\sin{\theta}}{2\pi}\cos(2(\p-\frac{\a}{2}))r^2\log
r.
\end{gather}
It follows from \eqref{4A:0}that $-F\ge C r^2\log r$
for sufficiently large $r$ in $S$ with arbitrarily
small  sectors near the boundary of $S$ removed. To
estimate $F$ near  the boundary of $S$,  we chose
$\theta$ so small that
\begin{equation}\label{4A:1}
\frac{c_0\sin{\theta}}{2\pi}(\frac94\pi+\frac{\a}{2})\le
\frac12 \sin^2(\frac94\pi+\frac{\a}{2})|b_1|.
\end{equation}
Then, by \eqref{4A:1}, for $z=re^{i\p}$, $\p$ close to
$\frac94 \pi+\frac\a2$, we have
\begin{equation}\label{4A:2}
\left(
|b_1|\sin^{2}\p-\frac{c_0\sin{\theta}}{2\pi}\p\sin(2(\p-\frac{\a}{2}))\right)r^2\ge
\frac{|b_1|}2 r^2\ge Cr^2.
\end{equation}
for some positive constant $C$. For $\p$ close to
$\frac74 \pi+\frac\a2$ we note that
$\sin(2(\p-\frac{\a}{2}))$ is negative,
therefore\begin{equation}\label{4A:3} \left(
|b_1|\sin^{2}\p-\frac{c_0\sin{\theta}}{2\pi}\p\sin(2(\p-\frac{\a}{2}))\right)r^2\ge
Cr^2
\end{equation}

 Now, it follows from \eqref{4A:2}, \eqref{4A:3} that  for some constant $A>0$, small
enough, the quadratic term in \eqref{4A:0} majorates
the log-quadratic term in the domains $S_1(A)=\{z=r
e^{i\p}, |\p-(\frac74\pi+\frac{\a}{2})| \le
\frac{A}{\log r}\}$ and $S_2(A)=\{z=r e^{i\p},
|\p-(\frac94\pi+\frac{\a}{2})| \le \frac{A}{\log
r}\}$:
$$\left|\frac{c_0\sin{\theta}}{2\pi}\cos(2(\p-\frac{\a}{2}))r^2\log
r\right|\le \frac{C}{2}r^2,\ z\in S_1(A)\cup S_2(A).$$
Therefore, in the domain $S'=S\cup S_1(A)\cup S_2(A)$,
$S'$ slightly larger than the quarter-plane, we have
\begin{equation*}
    -F(z)\ge \frac{C}{2}|z|^2.
\end{equation*}

Now suppose that for some nontrivial analytical
function $u(z)$ we have
\begin{equation}\label{4A:contr}
    \iint_{S'} |u(z)|^2 e^{-2F(z)} d\m(z)<\infty
\end{equation}
 In order to obtain a contradiction, we
make a conformal mapping  of the domain  $S'$ onto a
domain covering the upper halve-plane. We will do it
in two steps.

 Let $\z=\r e^{i\vk}=(ze^{i\frac74\pi})^2.$
Under this mapping, the domain $S'$ is transformed
conformally  onto
 \begin{equation*}
 \tilde S=\{\z:\ \r> r_0^2, \
-\d(\r)<\arg\vk<\pi+\d(\r) ,\end{equation*} where
 \begin{equation*}
 \d(\r)\sim
\frac{2A}{\log \r}.\end{equation*}

Next we make a change of variables in the integral in
\eqref{4A:contr} by setting
${v(\z)}=\frac{u(\sqrt{\z})}{\sqrt{\z}}$. We obtain
\begin{equation}\label{4A:assumption1}
    \iint\limits_{\tilde S}|v(\z)|^2e^{|\z|}d\m(\z)<\infty.
\end{equation}
As it follows from  our construction, the domain
$\tilde S$ is slightly, logarithmically, larger than
the upper half-plane with a disk removed. Now we map
conformally this set onto the upper half-plane with a
compact set removed. It is more convenient to do it by
considering the inverse mapping.

 We set
$\z=\z(\o)=\o(\log(\o-\frac{\pi i}{2}))^A$. If $A$ is
small enough and $a$ is large enough, the image under
this mapping of the upper half-plane with the disk
$|\o|<a$ removed, lies in $\tilde S$. By Proposition
\ref{4:unival}, the mapping $\z(\o)$ is univalent in
this set, so the inverse, $\o=\o(\z)$ exists, maps the
image of $\z(\o)$ onto the $\C_a$ and its asymptotics
as $|\z|\to\infty$ can be easily found. We change
variables in the integral in \eqref{4A:assumption1}
which gives
\begin{equation}\label{4:assumption4}
    \iint_{\C^+_{a}}|v(\z(\o))|^2|\z'(\o)|
    e^{\frac{C}{4}|\o||\log|\o||^A}d\m(\o)<\infty.
\end{equation}
Since $|\z'(\o)|$ behaves  logarithmically at
infinity, we can apply  Proposition \ref{nonex.half}
with $h(x_2)=(\log(|x_2|+1))^A-C$. Thus,
\eqref{4:assumption4} can hold only for  $v\equiv 0$,
or $u\equiv 0$.

Now we consider the sector $T$. From the expression
for $F(z)$ in \eqref{2:Final} we obtain for  small
$\theta$
\begin{equation}\label{4A:S1}
    F(re^{i(\frac{\a}2+\frac34\pi})=(\frac12+O(\theta))r^2\ge\frac14r^2.
\end{equation}

We consider now an auxiliary entire analytical
function $H(z)=-\frac{i}6 e^{-\a}z^2$, so that $\re
H(z)=-\frac16 r^2\sin2(\p-\frac\a2-\frac\pi2)$. Then,
by \eqref{4A:S1} and \eqref{4A:2} for the function
$\tilde F(z)=F(z)-\re H(z)$ we have
\begin{equation*}
\tilde F(z)\ge \frac1{12}r^2 \ {\rm for}\
\p=\frac94\pi+\frac\a2,\ \tilde F(z)\ge\frac1{24}r^2 \
{\rm for}\ \p=\frac34\pi+\frac\a2.
\end{equation*}
So,  the log-quadratic term in $\tilde F$ is positive
in the sector $T$ and the quadratic term in $\tilde F$
is positive on the boundaries of $T$. Therefore,
similar to the above consideration in the sector $S$,
the function $\tilde F$ admits a quadratic lower
estimate in a domain slightly larger than $S$:
\begin{equation*}
\tilde F(z)\ge C|z|^2,
\p\in[\frac\a2,\frac34\pi+\frac{A}{\log
r}]\cup[\frac94\pi+\frac\a2-\frac{A}{\log
r},\frac\a2+2\pi].
\end{equation*}
If an entire analytical function $u$ satisfies
$\iint_{\C}e^{2F}|u|^2\d\m(z)<\infty$ then the
function
 $\tilde u=u\exp(H)$ satisfies $\iint_{\C}e^{2\tilde F}|\tilde
 u|^2\d\m(z)<\infty$.
 It remains to repeat the  reasoning with the conformal
 mapping used for the sector $S$ to show that the
 function $\tilde{u}$ and therefore $u$ must
 necessarily be zero.

 This concludes the proof of Theorem \ref{thm:nonex}.

\section{The Non-resonance Case}
As it can be observed from the calculations above, the
main trouble in the study of the 'sector'
configuration is created by power-logarithmic behavior
of the function $F$.  In the non-resonance case such
terms are not present in $F$, therefore the analysis
of zero modes is considerably easier.
\begin{theorem} Let $B(z)=B(re^{i\p})$ be radial homogeneous
of degree $s\in(-2,0]$ and
$\b_0=(2\pi)^{-1}\int_0^{2\pi}B(e^{i\p})d\p\ne 0$. For
$s=0,-1$ we suppose that
$\int_0^{2\pi}B(e^{i\p})e^{i(s+2)\p}d\p=0$. Then, if
$\int_0^{2\pi}|B(e^{i\p})-\b_0|^2d\p$ is sufficiently
small then the space of zero modes is
infinite-dimensional.\end{theorem}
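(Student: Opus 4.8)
The plan is to regard $B$ as a small perturbation of the rotationally symmetric field $B_0(z)=\b_0|z|^s$. Replacing $B$ by $-B$ sends a solution $F$ of \eqref{Poisson} to $-F$, interchanges the two weights $e^{\pm F}$, and leaves every hypothesis of the theorem unchanged, so we may assume $\b_0>0$; it then suffices to produce an infinite-dimensional space of entire functions $u$ with $\iint_{\C}|u|^2e^{-2F}\,d\m<\infty$. Write $B=B_0+B_1$ with $B_1(z)=b_1(\p)|z|^s$, $b_1(\p)=B(e^{i\p})-\b_0$, so that $\int_0^{2\pi}b_1\,d\p=0$. As in Section~\ref{general} we look for a solution of \eqref{Poisson} of the form $F=F_0+F_1$ with both summands positively homogeneous of degree $s+2$: a direct computation using \eqref{2:EqForF} gives the radial part $F_0(z)=\frac{\b_0}{(s+2)^2}|z|^{s+2}$ (this is what makes the weight decay), while $F_1=g_1(\p)r^{s+2}$, where $g_1$ solves $g_1''+(s+2)^2g_1=b_1$ on the circle.

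The function $g_1$ is constructed and estimated via Fourier series. Since $\widehat{b_1}(0)=0$, the formula $\widehat{g_1}(n)=\widehat{b_1}(n)\bigl((s+2)^2-n^2\bigr)^{-1}$ determines $\widehat{g_1}(n)$ for every $n$ with $(s+2)^2\ne n^2$ and gives a solution of the equation. For non-integer $s$ the denominator never vanishes and $g_1$ is uniquely determined; for $s\in\{-1,0\}$ the only obstruction occurs at $n=\pm(s+2)$, where the hypothesis $\int_0^{2\pi}B(e^{i\p})e^{i(s+2)\p}\,d\p=0$ and the reality of $B$ force $\widehat{b_1}(\pm(s+2))=0$, so we may simply set $\widehat{g_1}(\pm(s+2))=0$. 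In all cases $g_1$ is a well-defined $2\pi$-periodic function, no logarithmic term enters (cf.~\eqref{2:log}), and $F=F_0+F_1$ is a homogeneous solution of \eqref{Poisson} of degree $s+2$. Moreover, by the Cauchy--Schwarz inequality together with Parseval's identity,
\[
\|g_1\|_{C^0(S^1)}\le\sum_{n}|\widehat{g_1}(n)|\le C(s)\,\|B(e^{i\p})-\b_0\|_{L_2(S^1)},
\]
where $C(s)^2$ is a fixed multiple of $\sum_{n:\,(s+2)^2\ne n^2}\bigl((s+2)^2-n^2\bigr)^{-2}$, a series which converges because its terms decay like $n^{-4}$; in particular $C(s)<\infty$ for every $s\in(-2,0]$.

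The theorem now follows quickly. If $\|B(e^{i\p})-\b_0\|_{L_2(S^1)}$ is small enough that $C(s)\,\|B(e^{i\p})-\b_0\|_{L_2(S^1)}<\b_0(s+2)^{-2}$, then $\frac{\b_0}{(s+2)^2}+g_1(\p)\ge c_1>0$ for all $\p$, hence $F(z)\ge c_1|z|^{s+2}$ for all $z$. Consequently, for every polynomial $P$,
\[
\iint_{\C}|P(z)|^2e^{-2F(z)}\,d\m(z)\le\iint_{\C}|P(z)|^2e^{-2c_1|z|^{s+2}}\,d\m(z)<\infty,
\]
because $s+2>0$. Thus every polynomial $P$ is a zero mode; the zero-mode space contains the linearly independent functions $1,z,z^2,\dots$ and is therefore infinite-dimensional.

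The only genuinely delicate points are the handling of the resonant degrees $s=-1$ and $s=0$, where the orthogonality hypothesis is precisely what is needed both to kill the would-be logarithmic term in $F$ and to eliminate the divergent Fourier mode of $g_1$, and the step from the $L_2$-smallness of $B(e^{i\p})-\b_0$ to a lower bound on $F$ that is uniform over all directions, which is delivered by the absolutely convergent Fourier estimate above. The remaining steps are routine.
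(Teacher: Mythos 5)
Your proof is correct and follows the same strategy as the paper's: split $B$ into its radial mean $\b_0 r^{s}$ and the zero-mean angular part, solve the Poisson equation by the homogeneous ansatz $F=\b_0(s+2)^{-2}r^{s+2}+\varphi(\p)r^{s+2}$ with $\varphi''+(s+2)^2\varphi$ equal to the angular remainder, show $\|\varphi\|_{C(\mathbf{S}^1)}$ is controlled by $\|B(e^{i\p})-\b_0\|_{L_2(\mathbf{S}^1)}$, deduce $F(z)\ge c_1|z|^{s+2}$, and conclude that all polynomials are zero modes. The one genuine difference is the sup-norm bound for $\varphi$: the paper invokes elliptic regularity $\|\varphi\|_{H^2(\mathbf{S}^1)}\le C\|\tilde B\|_{L_2(\mathbf{S}^1)}$ followed by the Sobolev embedding $H^2(\mathbf{S}^1)\hookrightarrow C(\mathbf{S}^1)$, whereas you prove it directly from $\widehat{\varphi}(n)=\widehat{b_1}(n)\bigl((s+2)^2-n^2\bigr)^{-1}$, Cauchy--Schwarz, and the convergence of $\sum_{n}\bigl((s+2)^2-n^2\bigr)^{-2}$. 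Your version is more self-contained and makes visible how the constant degenerates as $s$ approaches the resonant values $-1$ or $0$ --- which is exactly the point where the extra orthogonality hypothesis is needed and is exploited, in both arguments, to kill the $n=\pm(s+2)$ mode. For a constant-coefficient ODE on the circle the two bounds are of course the same calculation in different clothing. You also spell out the harmless reduction to $\b_0>0$ via $B\mapsto -B$, which the paper uses tacitly. Everything else matches.
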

\begin{proof} Suppose that $\b_0>0$ and set
 $\tilde B(x)=B(x)-\b_0 r^s$. Then the solution
$F$ of the Poisson equation $\D F=B$ can be
represented as $F=\F+\tilde{F}$, where
$\F(x)=\b_0(s+2)^{-2}|r|^{s+2}$ and $\tilde
F=\varphi(\p)r^{s+2}$ with $\varphi(\p)$ being a
solution of $\varphi''(\p)+(s+2)^2\varphi(\p)=\tilde
B(\p)$. Such solution exists (for $s=0$ or $s=-1$ we
use the orthogonality condition and require also that
$\varphi(\p)$ is again orthogonal to $e^{(s+2)\p}$ ).
Moreover, the solution $\varphi(\p)$, by ellipticity,
belongs to the Sobolev space $H^2$ on the circle
$\mathbf{S}^1$ with estimate
$||\varphi||_{H^2(\mathbf{S}^1)}\le C ||\tilde
B||_{L_2(\mathbf{S}^1)}$. Thus, if the latter norm is
small enough, then, by the embedding theorem,
$||\varphi||_{C(\mathbf{S}^1)}$ ia also small and can
be made smaller than $|\b_0|$. In this case, it turns
out that $F(x)\ge C|x|^{s+2}$ and therefore for any
polynomial $p(z)$ the integral
$\iint_{\C}|p(z)|^2e^{-2F}d\m(z)$ converges.
\end{proof}

We explain here the role of the orthogonality
condition in the cases $s=0$ and $s=-1$. If it is
violated, the solution of the Poisson equation
 contains necessarily a log-power term, similar to the
one considered in Sections 3,4, and therefore will not
be sign-definite. The same complication arises in the
case $s=-2$.

Finally, we present a construction showing that in the
nonresonance case the absence of zero modes can also
occur.

\begin{example}
Let $s\in(-1,0]$. We construct the  function
$F(z)=F(re^{ip})=f(\p)r^{s+2}$ in the following way.
For some $\e>0$, $\e<\frac14(1+s)$, we consider two
disjoint  arcs $I_+,I_-$ in $\mathbf{S}^1$ having
length $\pi(s+2-\e)^{-1}<\frac\pi2$. We set
$f(\p)=\b_+>0$ on $I_+$, $f(\p)=\b_-<0$ on $I_-$ with
some constants $\b_{\pm}$ and define $f(\p)$ in an
arbitrary way on the complement of $I_{\pm}$, to
obtain a smooth function on the circle. Denote by
$S_{\pm}$ the sectors in $\C$ defined by the arcs
$I_{\pm}$. Supposing that there exists an analytical
function $u(z)$ satisfying
$\iint_{S_+}e^{2F}|u(z)|^2d\m(z)<\infty$, we make a
conformal mapping $z(\z)$ of the upper half-plane
$\C_+$ onto the  sector $S_+$,
$z=z_0\z^{(s+2-\e)^{-1}}$, $|z_0|=1$. Then the
integral transforms to
\begin{equation}\label{nonres:1}
    (s+2-\e)^{-1}\iint_{\C_+}e^{2F(z_0\z^{(s+2-\e)^{-1})}}|u(z_0\z^{(s+2-\e)^{-1}})|^2
    |\z|^{(s+2-\e)^{-1}-1}d\m{\z}.
\end{equation}
For the exponent $2F(z_0\z^{(s+2-\e)^{-1}})$ we have
the lower estimate by $|z|^{\frac{s+2}{s+2-\e}}$, and
by Proposition \ref{nonex.half} the  function $u$
should be zero. In a similar way, the integral
$\iint_{S_-}e^{-2F}|u(z)|^2d\m(z)$ cannot be finite
unless $u\equiv 0$.
\end{example}

\appendix \section{Some integrals} We present here the
calculation of the integral in \eqref{3:int2}. we show
first that
\begin{equation}\label{4:int1}
    v(z)=\int\limits_1^\infty\left(t\log(1-\frac{z^2}{t^2})+\frac{z^2}{t^2}\right)dt=
    \frac{z^2-1}{2}\log(1-z^2)-\frac{z^2}{2}.
\end{equation}

It is clear that $v(0)=0$. We find the derivative of
$v(z)$. For $|z|<1$ it is legal to
 differentiate under the
integral sign, therefore
\begin{equation*}
    v'(z)=-z\int\limits_1^\infty(\frac{1}{t-z}+\frac{1}{t+z}-\frac{2}{t})dt=z\log(1-z^2).
\end{equation*}
The derivative of the right-hand side in
\eqref{4:int1} gives the same expression.

For $|z|\ge1, \im z\ne0$, we can continue analytically
the expression in  \eqref{4:int1} separately to the
upper and lower half-planes, and the corresponding
branches of the logarithm should be used.

 Next we consider the integral in  \eqref{3:int2}. We represent it, using \eqref{4:int1} as
 \begin{equation}\label{4:int2}\begin{split}
    W_\e(z)=\int\limits_{-\e}^\e d\vartheta\int\limits_1^\infty
     \left(\log(1-\frac{z^2}{\t^2}e^{-2i\vartheta})
    +\frac{z^2}{\t^2}e^{-2i\vartheta}\right)\t d\t=\\
    \frac12 \int\limits_{-\e}^\e(ze^{i\theta})^2\log(1-(ze^{i\theta})^2)d\theta-\frac12
    \int\limits_{-\e}^\e(ze^{i\vartheta})^2d\vartheta-\frac12
    \int\limits_{-\e}^\e\log(1-(ze^{i\vartheta})^2)d\vartheta.
 \end{split}\end{equation}

 We estimate the  $\vartheta$ integral  for small $\e$, obtaining
 \begin{equation}\label{5:int3}
W_\e(z)=\frac12|z|^2\log|z|\sin(2\e)\cos(2\phi)+\e
O(|z|^2)
\end{equation}
as $z$ tends to infinity along the line
$z=|z|e^{i\phi}$.

\end{document}